\pgfplotsset{compat=1.18}
\newcommand{\R}{\mathbb R}
\newtheorem{prop}{Proposition}[section]
\newtheorem{theorem}{Theorem}[section]
\newtheorem{prob}{Problem}
\newtheorem{defn}{Definition}[section]
\newtheorem{remark}{Remark}[section]
\newtheorem{lemma}{Lemma}[section]
\newtheorem{example}{Example}[section]
\newtheorem{conj}{Conjecture}[section]
\newtheorem{cor}{Corollary}[section]
\newtheorem{construction}{Construction}
\newtheorem*{retheorem}{Theorem 1.2}
\DeclareMathOperator{\supp}{supp}
\DeclareFontFamily{U}{mathx}{}
\DeclareFontShape{U}{mathx}{m}{n}{<-> mathx10}{}
\DeclareSymbolFont{mathx}{U}{mathx}{m}{n}
\DeclareMathAccent{\widehat}{0}{mathx}{"70}
\DeclareMathAccent{\widecheck}{0}{mathx}{"71}
\title[Weighted Fourier extension for the cone by circle tangencies]{A sharp weighted Fourier extension estimate for the cone in $\mathbb R^3$ based on circle tangencies}
\author{Alexander Ortiz}
\address{Alexander Ortiz, Rice University, Houston, TX, USA}
\email{ao80@rice.edu}
\subjclass{42B10}
\keywords{Mizohata--Takeuchi conjecture, restriction theory, tangency rectangles}
\begin{document}

\begin{abstract}
We apply recent circle tangency estimates due to Pramanik--Yang--Zahl to prove sharp weighted Fourier extension estimates for the cone in $\mathbb R^3$ and $1$-dimensional weights. The idea of using circle tangency estimates to study Fourier extension of the cone is originally due to Tom Wolff, who used it in part to prove the first decoupling estimates. We make an improvement to the best known Mizohata--Takeuchi-type estimates for the cone in $\R^3$ and the 1-dimensional weights as a corollary of our main theorem, where the previously best known bound follows as a corollary of refined decoupling estimates.
\end{abstract}

\maketitle

\tableofcontents

\section{Introduction}
\subsection{Weighted estimates and the Mizohata--Takeuchi conjecture} In $\R^n$, if $\mathcal M\subset B^n(0,1)$ is a compact manifold with smooth surface measure $d\sigma$, the Fourier extension operator for functions on $\mathcal M$ is defined by
\[
E_\mathcal Mf(x) = \widecheck{fd\sigma}(x) = \int_{\mathcal M}f(\xi)e^{2\pi i x\cdot\xi}\,d\sigma(\xi),\quad x\in \R^n.
\]
Since $\mathcal M\subset B^n(0,1)$ is compact, $E_{\mathcal M}f$ is approximately constant on unit balls of $\mathbb R^n$. In particular, the level sets
\[
U_\alpha = \{x \in \R^n: |E_{\mathcal M}f(x)| > \alpha\}
\]
are approximately disjoint unions of unit balls. See for example Sections 2--5 of \cite{bourgain2011bounds} for a standard formalization of this version of the ``locally constant'' property. If $R>1$ and $X\subset B_R$ is a disjoint union of unit balls in an arbitrary ball of radius $R$, we are interested in how much the level sets $U_\alpha$ can concentrate on the ``test set'' $X$.
\begin{prob}[$L^2$ sparse restriction problem]\label{prob:shape-estimate}
    If $X\subset B_R$ is an arbitrary disjoint union of unit balls, what is the best constant $S_2(\mathcal M,X)$ so that
    \[
    (\int_X|E_{\mathcal M}f|^2\,dx)^{1/2} \le S_2(\mathcal M,X)\|f\|_{L^2(\mathcal M,d\sigma)}
    \]
    holds for all $f\in L^2(\mathcal M,d\sigma)$?
\end{prob}
If $\mathcal M$ is a hypersurface and $X\subset B_R$, by the conservation of mass,
\[
1 \le S_2(\mathcal M,X) \le R^{1/2},
\]
and examples of particular $\mathcal M$ and $X$ show that the shape of $X$ as well as the shape of $\mathcal M$ influence the size of the constant $S_2(\mathcal M,X)$. As a basic example of this phenomenon, consider the vertical column $X = [0,1]\times [0,R]\subset\R^2$, and the manifolds $\mathcal M_1 = \{(\xi,0):\xi\in[0,1]\}$, $\mathcal M_2 = \{(\xi,\xi^2):\xi\in[0,1]\}$. Simple calculations with the Fourier transform show $S_2(\mathcal M_1,X) \sim R^{1/2}$, while $S_2(\mathcal M_2,X)\sim R^{1/4}$.

Weighted Fourier extension estimates for model manifolds such as the paraboloid or the cone have applications to dispersive PDE \cite{barcelo1997weighted} and to geometric problems such as Falconer's distance problem. See the influential paper by Du and Zhang \cite{du2019sharp} for a weighted Fourier extension theorem and its application to pointwise convergence to the initial data for the Schr\"odinger equation, and the paper \cite{du2021weighted} for partial progress towards Falconer's distance set problem as a corollary of weighted Fourier extension estimates for spheres.

Problem \ref{prob:shape-estimate} can be seen as a special case of the Mizohata--Takeuchi conjecture of Fourier analysis, where the weight is the indicator function of a disjoint union of unit balls. Let $\mathcal M$ be a smooth compact hypersurface in $\R^n$, $n\ge 2$.  If $T$ is a tube in $\R^n$, let $\nu(T)$ be one of the two unit vectors parallel to the central axis of $T$. If a vector $v$ is orthogonal to $T_\xi\mathcal M$ for some point $\xi\in \mathcal M$, we write $v\perp \mathcal M$.
\begin{conj}[Local Mizohata--Takeuchi]\label{conj:mt-intro}
For every $\epsilon>0$, there is a constant $C_\epsilon$ such that the following holds for every $R>1$. If $\mathcal M\subset\mathbb R^n$ is a smooth compact hypersurface, $n\ge 2$, let
\[
\mathbb T(\mathcal M) = \{T\subset\R^n: \nu(T)\perp \mathcal M,\ T \ \text{is a $1\times \dots\times 1\times R$-tube}\} 
\]
be the collection of $1\times\dots\times 1\times R$-tubes whose direction is orthogonal to a tangent space of $\mathcal M$. If $w\colon \R^n\to[0,\infty)$ is a measurable weight, let
\[
\mathbf T_{\mathcal M}(w) = \sup\{\int_T w:T\in \mathbb T(\mathcal M)\}.
\]
Then the following estimate holds for all $f\in L^2(\mathcal M)$ and any $R$-ball $B_R\subset\R^n$:
\begin{equation}\label{eqn:mt-inequality}
    \int_{B_R}|E_{\mathcal M}f|^2w \le C_\epsilon R^\epsilon\,\mathbf T_{\mathcal M}(w)\|f\|_{L^2(\mathcal M)}^2.
\end{equation}
\end{conj}

Unlike the Fourier restriction conjecture, Conjecture \ref{conj:mt-intro} is made without any assumption on the curvature of $\mathcal M$. When the weight is constant on translates of a fixed hyperplane, and the hypersurface $\mathcal M$ is arbitrary, the conjecture is true essentially by Plancherel's theorem. When $\mathcal M$ is a sphere in $\R^n$, the conjecture is open in all dimensions $n\ge 2$. In the case $n = 2$, it is known for radial weights---see \cite{barcelo1997weighted,carbery1997pointwise,barcelo2008note}. Conjecture \ref{conj:mt-intro} has its roots in questions about dispersive PDE \cite{mizohata2014cauchy}, but it also has a close connection with the endpoint multilinear restriction conjecture, as well as with the Bochner--Riesz problem. See the article \cite{bennett2006stein} and the references therein for more discussion on Mizohata--Takeuchi-type estimates for spheres and the connection with the Bochner--Riesz problem on spherical Fourier summation.

Conjecture \ref{conj:mt-intro} is a local form of the Mizohata--Takeuchi conjecture with an $R^\epsilon$ loss. There is a global X-ray transform formulation of the Mizohata--Takeuchi conjecture which has received much attention from the Fourier analysis community in recent years, culminating in its disproof by Hannah Cairo \cite{cairo2025counterexample}.
\begin{conj}[Global Mizohata--Takeuchi]\label{conj:global-mt}
    There is an absolute constant $C>0$ such that the following holds.
    Let $\mathcal M$ be a smooth compact  hypersurface in $\R^n$ with surface measure $d\sigma$. Let $f\in L^2(\mathcal M,d\sigma)$, and let $w\colon\R^n\to[0,\infty)$ be a nonnegative measurable weight. Then we have
    \[
    \int_{\R^n}|E_{\mathcal M}f|^2w \le C\, \|Xw\|_{L^\infty}\|f\|_{L^2(d\sigma)}^2,
    \]
    where $Xw$ denotes the X-ray transform of $w$.
\end{conj}
Cairo's counterexample to Conjecture \ref{conj:global-mt} shows that in the local Mizohata--Takueuchi Conjecture \ref{conj:mt-intro}, the loss in the inequality must be at least of size $\log R$, so it does not rule out the possibility that Conjecture \ref{conj:mt-intro} holds.

In Section \ref{sec:discuss} we review the state of the art on Conjecture \ref{conj:mt-intro} for spheres due to Carbery, Iliopoulou, and Wang based on refined decoupling estimates \cite{carbery2024some}. We compare the strength of our main Theorem \ref{thm:main-l2-sparse} for the cone and $1$-dimensional weights with Theorem \ref{thm:ciprian} due to Ciprian Demeter.

As the Mizohata--Takeuchi conjecture is expected to hold regardless of the curvature of the manifold $\mathcal M$, we expect it to be fruitful for our overall understanding to see what we can say about weighted Fourier extension estimates for model manifolds of zero Gaussian curvature. In this paper, we investigate $L^2(w)$ estimates for $E_{\mathbb Cone^2}f$, where
\[
\mathbb Cone^2 = \{(\bar\xi,\xi_3)\in\mathbb R^2\times\R:1<|\bar\xi|<2,\xi_3=|\bar\xi|\}
\]
is the truncated cone in $\R^3$, and $w$ is the indicator function of a 1-dimensional disjoint union of unit balls. To state our main theorem, we make a preliminary definition. Let $\gamma(\theta) = \frac{1}{\sqrt 2}(\cos\theta,\sin\theta,1),\theta\in\R$. 
\begin{defn}
    If for some $\theta\in\R$ and $v\in\mathbb R^3$,
    \[
    P = v + \{a\gamma(\theta) + b\gamma'(\theta) + c(\gamma\times\gamma')(\theta): |a|\le \frac{R}{2},|b|\le \frac{R^{1/2}}{2},|c|\le \frac12\},
    \]
    then we say $P$ is a \emph{$1\times R^{1/2}\times R$-lightplank} with \emph{center} $v$.
\end{defn}
\begin{theorem}\label{thm:main-l2-sparse}
    For each $\epsilon > 0$,  there is a constant $C_\epsilon$ so the following holds for each $R > 1$. Suppose $X\subset B_R$ is a disjoint union of unit balls in $\R^3$ that satisfies the 1-dimensional Frostman non-concentration condition
    \begin{equation}\label{eqn:1-dimensional}
        |X\cap B(x,r)|\lesssim  r,\quad x\in \R^3,r>1.
    \end{equation}
    Let $\mathbf P(X)$ be the quantity
    \[
    \mathbf P(X) = \sup\{|X\cap P|:P\ \text{is a $1\times R^{1/2}\times R$-lightplank}\}.
    \]
    Then the estimate
    \begin{equation}\label{eqn:main-l2-sparse}
    \int_X |E_{\mathbb Cone^2}f|^2\le C_\epsilon R^\epsilon\, \mathbf P(X)^{1/2}\|f\|_{L^2(\mathbb Cone^2)}^2
    \end{equation}
    holds for all $f\in L^2(\mathbb Cone^2)$.
\end{theorem}
When a disjoint union of unit balls $X$ satisfies \eqref{eqn:1-dimensional}, we will say $X$ is \emph{$1$-dimensional}. The terminology behind referring to \eqref{eqn:1-dimensional} as a ``Frostman condition'' comes from geometric measure theory, where measures $\mu$ satisfying the power-law $\mu(B_r) \lesssim r^\alpha$ are known as \emph{Frostman measures of exponent $\alpha$}. The weighted estimate of Theorem \ref{thm:main-l2-sparse} is sharp in the sense that for each $R>1$, and for each $1<T<R$, there is a 1-dimensional disjoint union of unit balls $X$ such that $\mathbf P(X) \sim T$, and a nonzero function $f$ such that
\[
\int_X |E_{\mathbb Cone^2}f|^2 \gtrsim T^{1/2}\|f\|_{L^2(\mathbb Cone^2)}^2.
\]
Theorem \ref{thm:main-l2-sparse} is a corollary of the following weighted $L^1$ Fourier extension estimate.
\begin{theorem}\label{thm:ortiz-sparse-l1}
    For each $\epsilon > 0$,  there is a constant $C_\epsilon$ so the following holds for each $R > 1$. Suppose $X\subset B_R$ is a $1$-dimensional disjoint union of unit balls.
    
    Let $\mathbf P(X)$ be the quantity
    \[
    \mathbf P(X) = \sup\{|X\cap P|:P\ \text{is a $1\times R^{1/2}\times R$-lightplank}\}.
    \]
    Then for any measurable function $w\colon\R^3\to[0,1]$, the estimate
    \[
    \int_X|E_{\mathbb Cone^2}f|w\le C_\epsilon R^\epsilon\,\mathbf P(X)^{1/4}(\int_Xw)^{1/2}\|f\|_{L^2(\mathbb Cone^2)}
    \]
    holds for all $f\in L^2(\mathbb Cone^2)$.
\end{theorem}
We give the proof of Theorem \ref{thm:ortiz-sparse-l1} and the implication that Theorem \ref{thm:ortiz-sparse-l1} implies Theorem \ref{thm:main-l2-sparse} in Section \ref{sec:main-thm}.  As a corollary of Theorem \ref{thm:main-l2-sparse}, we have the following new partial progress on the Mizohata--Takeuchi conjecture for the cone and the $1$-dimensional weights.

\begin{cor}\label{cor:mt-progress}
    For each $\epsilon>0$, there is a constant $C_\epsilon$ so that if $X$ is a 1-dimensional disjoint union of unit balls in $B_R\subset\R^3$, then
    \[
    \int_X|E_{\mathbb Cone^2}f|^2 \le C_\epsilon R^{1/4+\epsilon}\,\mathbf T_{\mathbb Cone^2}(X)\|f\|_{L^2(\mathbb Cone^2)}^2.
    \]
\end{cor}
Corollary \ref{cor:mt-progress} establishes the case of Conjecture \ref{conj:mt-intro} for the cone and the 1-dimensional weights with a power $R^{1/4}$ loss. This is an improvement of the $R^{1/3}$ loss we get by applying  Theorem \ref{thm:ciprian} directly to $1$-dimensional weights.

Our approach to estimating $\int_X|Ef|^2$ is based on a duality argument originally due to Mattila \cite{mattila1987spherical} which connects weighted Fourier extension estimates with the decay of Fourier means, and the technique of point-circle duality which was originally used by Wolff in \cite{wolff2000local} to study $E_{\mathbb Cone^2}f$. 

\subsection{Decay of Fourier means}\label{sec:decay-fourier} Weighted Fourier extension estimates are closely related to average Fourier decay rates of measures, and our methods also give new results on average Fourier decay. Given a finite measure $\mu$ on $\R^n$, we define its Fourier transform by
\[
\widehat\mu(\xi) = \int_{\R^n}e^{-2\pi ix\cdot\xi}\,d\mu(x),\quad \xi\in\R^n.
\]
If $d\mu = \phi(x)\,dx$ for a Schwartz function $\phi$, then $\widehat\mu(\xi)$ decays rapidly as $|\xi|\to\infty$. On the other hand, without any assumptions on $\mu$, $\widehat\mu(\xi)$ need not decay at all as $|\xi|\to\infty$ because we allow very singular measures such as  $\mu = \delta_0$ where $\widehat\mu = 1$ everywhere. An example of a measure which fits in-between these two extremes is the product measure
\[
d\mu(x_1,x_2) = \phi(x_1)\,dx_1\otimes \delta_0(dx_2),
\]
where $\phi$ is a smooth compactly supported function on $\R$. The measure $\widehat\mu(\xi_1,\xi_2) = \widehat\phi(\xi_1)$ decays rapidly in the $\xi_1$ variable, but is constant in the $\xi_2$ variable. This motivates the definition of measures with finite $\alpha$-energy.

\begin{defn}
    If $\mu$ is a measure on $\R^n$, and $0<\alpha<n$, define the \emph{$\alpha$-energy}
    \[
    I_\alpha(\mu) = \iint \frac{d\mu(x)d\mu(y)}{|x-y|^\alpha}.
    \]
\end{defn}

In 1987, Mattila \cite{mattila1987spherical} introduced the problem of determining the optimal decay rate of spherical Fourier averages of compactly supported measures with finite $\alpha$-energy. This was proposed as a Fourier analytic approach to Falconer's distance set conjecture.

\begin{defn}[Optimal spherical Fourier average decay rate]
    For $n\ge 2$ and $0<\alpha<n$, $\beta_n(\alpha)$ is the supremum of the numbers $\beta>0$ such that there exists $C$ with
    \[
    \int_{S^{n-1}}|\widehat\mu(Re)|^2\,d\sigma(e)\le CR^{-\beta}I_\alpha(\mu),\quad\text{for all $R>1$,}
    \]
    for all measures $\mu$ supported in the unit ball of $\R^n$.
\end{defn}

In \cite{mattila1987spherical}, Mattila established the sharp exponent $\beta_n(\alpha) = \alpha$ when $0<\alpha\le \frac{n-1}{2}$. For the lower bound $\beta_n(\alpha)\ge \alpha$, Mattila used the stationary phase estimate $|\widecheck{d\sigma}(x)|\lesssim|x|^{-(n-1)/2}$, where $d\sigma$ is the surface measure of the sphere, by rewriting the Fourier average as a double integral
\begin{equation}\label{eqn:mattila}
\int_{S^{n-1}}|\widehat\mu(Re)|^2\,d\sigma(e) = \iint_{\R^n\times\R^n} \widecheck{d\sigma}(x-y)\,d\mathrm{Dil}_R\mu(x)d\mathrm{Dil}_R\mu(y),
\end{equation}
where $(\mathrm{Dil}_R\mu)(x) = R^{-n}\mu(\frac{x}{R})$.
Mattila gave examples of measures with finite $\alpha$-energy to prove the upper bound $\beta_n(\alpha)\le \alpha$ when $0<\alpha\le\frac{n-1}{2}$.

Following partial progress on the decay of spherical Fourier averages by Mattila \cite{mattila1987spherical}, Sj\"olin \cite{sjolin1993estimates}, and Wolff \cite{wolff1999decay}---who determined the optimal values of \(\beta_2(\alpha)\) for \(\alpha \in (1,2)\)---and advances in the Fourier restriction problem by Tao \cite{tao2003sharp}, Erdo\u{g}an \cite{erdogan2004note} established the sharp decay rates for \emph{conical} Fourier averages of \(\alpha\)-dimensional measures in \(\mathbb{R}^3\). These conical averages exemplify a broader class of Fourier averages over submanifolds that are linked to various problems in geometric measure theory and dispersive PDEs. In particular, they are equivalent to \(L^2\) fractal Strichartz estimates for the wave equation \cite{cho2017fractal,wolff2000local} and have applications to Marstrand-type theorems for restricted families of projections \cite{iosevich2014decay,oberlin2015application}. We now make precise the notion of the optimal conical Fourier decay rate:

\begin{defn}[Optimal conical Fourier average decay rate]\label{def:cone-rate}
    For $n\ge 3$, let
    \[
    \mathbb Cone^{n-1} = \{(\bar \xi,\xi_n)\in \R^{n-1}\times\R:1<|\bar \xi|<2,\xi_n = |\bar\xi|\}.
    \]
    For $0<\alpha<n$, $\gamma_n(\alpha)$ is the supremum of the numbers $\gamma>0$ such that there exists $C$ with
    \[
    \int_{\mathbb Cone^{n-1}}|\widehat\mu(Re)|^2\,d\sigma(e)\le CR^{-\gamma}I_\alpha(\mu),\quad\text{for all $R>1$,}
    \]
    for all measures $\mu$ supported in the unit ball of $\R^n$.
\end{defn}
\begin{theorem}[Erdo\u gan, 2004]\label{thm:erdogan}
For $\alpha \in (0,3)$, the values of $\gamma_3(\alpha)$ are as follows:
\[
 \gamma_3(\alpha) = 
\begin{cases}
    \alpha, & \alpha\in (0,1/2]\\
    1/2, & \alpha \in [1/2,1]\\
    \alpha/2, & \alpha \in [1,2]\\
    \alpha-1, & \alpha \in [2,3)
\end{cases}.
\]
\end{theorem}

\begin{figure}[h]
\centering
\begin{tikzpicture}
\begin{axis}[
    axis lines = left,
    xlabel = $\alpha$,
    ylabel = {},
    ymin = 0, ymax = 2,
    xmin = 0, xmax = 3,
    domain=0:3,
    samples=100,
    xtick={0,0.5,1,2,3},
    ytick={0,0.5,1,1.5,2},
    grid=both,
    width=12cm,
    height=8cm
]

\addplot[domain=0:0.5, thick, blue] {x};

\addplot[domain=0.5:1, thick, blue] {0.5};

\addplot[domain=1:2, thick, blue] {x/2};

\addplot[domain=2:2.99, thick, blue] {x - 1};

\addplot[only marks, mark=*, mark size=2pt] coordinates {(0.5,0.5) (1,0.5) (2,1)};
\addplot[only marks, mark=o, mark size=2pt] coordinates {(0.5,0.5) (2,1)};

\end{axis}
\end{tikzpicture}
\caption{Plot of $\gamma_3(\alpha)$}
\end{figure}

A special case of our main $L^1$ weighted Fourier extension estimate contains a refinement of the $\alpha = 1$ case of Erdo\u gan's estimate. In the rest of the introduction, we will focus on this special case whose proof contains most of the key ideas needed to prove Theorem \ref{thm:ortiz-sparse-l1}, and in Section \ref{sec:main-thm} we will elaborate more on the connection between weighted Fourier extension estimates and the decay of Fourier means.
\begin{theorem}[Refinement of $\gamma_3(1)=1/2$]\label{thm:main-fourier-decay}
For each $\epsilon > 0$, there is a constant $C_\epsilon$ so the following holds for each $R > 1$. Suppose $X\subset B_R$ is a 1-dimensional disjoint union of unit balls, and let $d\mu = 1_Xdx$.
Let $\mathbf P(\mu)$ be the quantity
\[
\mathbf P(\mu) = \sup\{\mu(P):P\ \text{is a $1\times R^{1/2}\times R$-lightplank}\}.
\]
Then the estimate
\[
\int_{\mathbb Cone^{2}} |\widehat\mu(e)|^2d\sigma(e) \le C_\epsilon R^{\epsilon}\, \mathbf P(\mu)^{1/2}\mu(B_R)
\]
holds.
\end{theorem}
The assumption \( d\mu = 1_X\,dx \), where \( X \) is a 1-dimensional disjoint union of unit balls, in Theorem~\ref{thm:main-fourier-decay} differs slightly from the hypothesis in Theorem~\ref{thm:erdogan}, which considers measures supported in the unit ball with finite \( 1 \)-energy. Although finite \( 1 \)-energy is not precisely equivalent to satisfying a \( 1 \)-dimensional Frostman condition, the two are closely related. More importantly, the two contexts can be brought into alignment via a simple rescaling: applying Theorem~\ref{thm:erdogan} to a pushforward of \( \mu \) under dilation yields an estimate of the quantity
\[
\int_{S^{n-1}} |\widehat{\mu}(e)|^2\, d\sigma(e),
\]
for measures supported in balls of arbitrary radius. With this observation, one can easily verify that Theorem~\ref{thm:erdogan} implies the following slightly weaker version of Theorem~\ref{thm:main-fourier-decay}:

\begin{cor}\label{cor:erdogan}
    For each $\epsilon > 0$, there is a constant $C_\epsilon$ so the following holds for each $R > 1$. Suppose $X\subset B_R$ is a 1-dimensional disjoint union of unit balls, and let $d\mu = 1_Xdx$. Then the estimate
    \[
    \int_{\mathbb Cone^2} |\widehat\mu(e)|^2d\sigma(e) \le C_\epsilon R^\epsilon R^{3/2}
    \]
    holds.
\end{cor}

If $d\mu = 1_Xdx$ is a measure satisfying the assumptions of Theorem \ref{thm:main-fourier-decay}, then $\mathbf P(\mu)\le \mu(B_R) \lesssim R$, so Theorem \ref{thm:main-fourier-decay} also  implies Corollary \ref{cor:erdogan}. However, for measures $\mu$ where $\mathbf P(\mu)$ is much smaller than $R$, Theorem \ref{thm:main-fourier-decay} gives a better estimate than Corollary \ref{cor:erdogan}.

\subsection{Circle tangencies and an overview of the proof of Theorem \ref{thm:main-fourier-decay}} Our approach to conical Fourier average decay is based on Equation \eqref{eqn:mattila} with $d\mu = 1_Xdx$. We rewrite the conical Fourier mean of $\mu$ as a double integral:
\begin{equation}\label{eqn:proof-start}
\int |\widehat\mu|^2d\sigma = \iint_{X\times X} \widecheck{d\sigma}(x-y)\,dx\,dy
\end{equation}
and we note the following fact about the cone. If $d\sigma$ is a smooth surface carried measure for $\mathbb Cone^2$, and $\Gamma_0 = \{(a,r)\in\R^2\times\R:||a|-|r||=0\}$ is the lightcone with vertex $0$, then for every $\epsilon>0$ and $N>1$,
\begin{equation}\label{eqn:cone-decay}
|\widecheck{d\sigma}(x)|\le C(\epsilon,N)\frac{1}{(1+|x|)^{1/2-\epsilon}}\frac{1}{(1+d(x,\Gamma_0))^N}.
\end{equation}
The inequality \eqref{eqn:cone-decay} is not a new estimate---see for instance expression (13) with $n = 3,\nu=1$ in the proof of Theorem 3 in Guo's article \cite{guo1993p}. To keep this article self-contained, we include the proof by stationary phase considerations in Appendix \ref{app:ft}.

Heuristically by inequality \eqref{eqn:cone-decay}, the only pairs $(x,x')\in X^2$ which contribute to the double integral in Equation \eqref{eqn:proof-start} are those such that $x-x'$ is close to the lightcone $\Gamma_0$. By the rapid decay in inequality \eqref{eqn:cone-decay}, the only pairs $x = (\bar x,x_3), x' = (\bar x',x_3')$ which contribute to the double integral satisfy
\[
\Delta(x,x') = ||\bar x-\bar x'|-|x_3-x_3'|| < 1.
\]
If we interpret the points $x=(\bar x,x_3)$ and $x'=(\bar x',x_3')$ of $X$ as circles in the plane with centers $\bar x,\bar x'$ and radii $x_3,x_3'$, respectively, then $\Delta(x,x')<1$ has the interpretation that the ``circles'' $x$ and $x'$ are almost internally tangent. We thus need a solution of a discretized variant (see Problem \ref{prob:pairs} and Theorem \ref{thm:num-ll-pairs}) of the following discrete tangency counting problem introduced by Wolff in \cite{wolff1999recent}.
\begin{prob}[Tangency counting problem]\label{prob:wolff-tang}
    If $X\subset \R^3$ is a finite set of points in $[-\frac{1}{5},\frac{1}{5}]^2\times[\frac{4}{5},\frac{6}{5}]$ that we regard as a collection of circles in the plane with given center-radius pairs, estimate the number of internal tangencies of the configuration $X$:
    \[
    CT(X) = \#\{(x,x')\in X^2: ||\bar x-\bar x'| - |x_3 - x_3'|| =0\}.
    \]
    Here we write a point $x\in\R^3$ in coordinates as $x = (\bar x,x_3)\in \R^2\times \R$.
\end{prob}
To address the discretized tangency counting problem in Problem~\ref{prob:wolff-tang}, we apply a powerful recent result of Pramanik--Yang--Zahl~\cite{pramanik2022furstenberg} as a black box. Their estimate can be viewed as a \emph{dual circular maximal function} estimate: whereas classical circular maximal function bounds control the size of maximal averages of a function $f$ over a family of circles satisfying geometric constraints, the dual perspective instead controls how densely such a family of circles can overlap at points in the plane.  This overlap control will be the key ingredient in bounding the number of pairwise internal tangencies among such circles.

We use the notation $E(z) = 1_E(z)$ for the indicator function of a set $E$ in the statement of the following theorem and elsewhere in the paper.
\begin{theorem}[Pramanik--Yang--Zahl, 2022]
    For each $\epsilon>0$, there is a constant $C_\epsilon$ so the following holds for all $\delta>0$ sufficiently small. Suppose $X\subset [-\frac15,\frac15]^2\times[\frac45,\frac65]$ is a set of $\delta$-separated points regarded as circles in the plane obeying the 1-dimensional Frostman non-concentration condition
    \[
    |X\cap B(x,r)|\lesssim \frac{r}{\delta},\quad x\in \R^3,r>\delta.
    \]
    Then the following estimate holds:
    \begin{equation}\label{eqn:pyz}
    \int_{\R^2}\big(\sum_{x\in X}C_{\delta,x}(z)\big)^{3/2}\,dz \le C_\epsilon \delta^{-\epsilon}\delta|X|,
    \end{equation}
    where $C_{\delta,x} = \{z\in\R^2:||z-\bar x|-x_3|<\delta\}$ is a $\delta$-thick annulus.
\end{theorem}
To apply Pramanik--Yang--Zahl's estimate successfully, we will need to set up an appropriate dictionary translating circle tangency estimates into a bound for the double integral in Equation \eqref{eqn:proof-start}. We take up this work in Section \ref{sec:pc-duality}.

The idea of applying circle tangencies to study $E_{\mathbb Cone^2}f$ is not new. In ``Local smoothing type estimates in $L^p$ for large $p$'' \cite{wolff2000local}, Wolff used the point-circle duality idea to study $E_{\mathbb Cone^2}f$. Part of his goal was to understand the large level sets of $E_{\mathbb Cone^2}f$, and he used estimates for the number of circle tangencies of configurations of circles as a key ingredient to prove the first sharp decoupling estimates for the cone in $L^p$ for large $p$.

To apply facts about circles to the Fourier extension of the cone successfully, Wolff also needed to partly describe a dictionary of lemmas that relates the geometry of circle tangencies in the plane to the geometry of points in the upper half space. Part of our goal in this paper is to establish the facts from this dictionary we need in order to prove Theorem \ref{thm:ortiz-sparse-l1}. Another goal of this presentation is to clarify and expand on some of the same facts Wolff used in \cite{wolff2000local}.

\subsection{Paper organization} In Section \ref{sec:geometry-rect-lightplanks}, we develop the correspondence between approximate circle tangencies and point-lightplank incidences, which we use to establish Theorem \ref{thm:num-ll-pairs}—a crucial component in the proof of Theorem \ref{thm:main-l2-sparse}. Section \ref{sec:main-thm} presents our main results on sparse Fourier restriction, including Theorems \ref{thm:ortiz-sparse-l1}, \ref{thm:main-l2-sparse}, and the conical Fourier average estimate Theorem \ref{thm:main-fourier-decay}. Finally, in Section \ref{sec:discuss}, we discuss how Theorem \ref{thm:main-l2-sparse} delivers improved partial progress on the local Mizohata–Takeuchi conjecture, going beyond what current approaches based on refined decoupling can achieve.

\subsection*{Acknowledgments}
I would like to acknowledge Larry Guth for his constant support and invaluable discussions throughout this project. I would also like to acknowledge Jill Pipher and Tainara Borges for insightful questions about the proof of Theorem \ref{thm:ortiz-sparse-l1} leading to a cleaner exposition of the implication ``Theorem \ref{thm:ortiz-sparse-l1} $\implies$ Theorem \ref{thm:main-l2-sparse}.'' Thanks to Yixuan Pang for drawing my attention to the reference of the Fourier decay estimate \eqref{eqn:cone-decay} from Guo's paper \cite{guo1993p}. I extend my appreciation to Ciprian Demeter who let me include the statement of Theorem \ref{thm:ciprian} here and in my doctoral thesis. Finally, thanks to the anonymous referees for their detailed and thoughtful feedback on earlier versions of this paper, and especially for drawing my attention to Wolff's use of the point-circle duality technique in \cite{wolff2000local}.

\section{Point-circle and rectangle-lightplank duality} \label{sec:pc-duality}

\label{sec:maximal-estimate}
Fix $\delta > 0$ and, for $a\in \R^2$ and $r\in [1,2]$, let $C_{\delta,a,r} = \{x\in \R^2:r-\delta < |x-a|<r+\delta\}$. If $f\colon \R^2\to \R$, then we define Wolff's circular maximal function $M_\delta f\colon[1,2]\to\R$ by
\[
M_\delta f(r) = \sup_{a\in\R^2}\frac{1}{|C_{\delta,a,r}|}\int_{C_{\delta,a,r}}|f(z)|\,dz.
\]
Originally in \cite{wolff1997kakeya}, Wolff proved the following estimate for the maximal function $M_\delta f$.
\begin{theorem}\label{thm:wolff-kakeya-circles}
If $\epsilon > 0$ then there is a constant $A_\epsilon$ such that for all $\delta > 0$ and $f$,
\begin{equation}\label{wolffL3}
\|M_\delta f\|_{L^3([1,2],dr)} \le A_\epsilon \delta^{-\epsilon}\|f\|_{L^3(\R^2,dz)}.
\end{equation}
\end{theorem}
The estimate \eqref{wolffL3} has an equivalent dual form. Suppose that $a(r)$ is a measurable choice of center for a circle in the plane of radius $r$, and $w(r)$ is a nonnegative weight function. Define a multiplicity function
\[
m[w,a](z) = \int_{1}^2 w(r)\frac{C_{\delta, a(r), r}(z)}{|C_{\delta, a(r), r}|}\,dr,\quad z\in \mathbb C.
\]
\begin{prop}[Multiplicity formulation of the maximal estimate]\label{prop:dual-wolff}
If $\epsilon > 0$ then there is a constant $A_\epsilon$ such that for all $\delta > 0$, $a(r)$ and $w(r)$,
\begin{equation}\label{dualWolff}
\|m[w,a]\|_{L^{3/2}(\R^2,dz)}\le A_\epsilon\delta^{-\epsilon}\|w\|_{L^{3/2}([1,2],dr)}.
\end{equation}
\end{prop}
\begin{prop}
Wolff's maximal estimate is equivalent to its dual formulation in terms of multiplicity functions.
\end{prop}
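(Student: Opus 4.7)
The plan is to pass between the two inequalities by linearizing the maximal function and invoking $(L^3, L^{3/2})$ duality, treating each implication separately.

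For \eqref{wolffL3} $\Rightarrow$ \eqref{dualWolff}, given any measurable center function $a(r)$ I would define the linear operator
\[
T_a f(r) = \int_{\R^2} \frac{\mathbf{1}_{C_{\delta, a(r), r}}(x)}{|C_{\delta, a(r), r}|}\, f(x)\,dx.
\]
For $f\ge 0$ we have the pointwise bound $T_a f(r)\le M_\delta f(r)$, so \eqref{wolffL3} gives $\|T_a f\|_{L^3([\frac12,2])} \le A_\epsilon \delta^{-\epsilon}\|f\|_{L^3(\R^2)}$, and this extends to all $f\in L^3(\R^2)$ by decomposing into positive and negative parts. The formal adjoint $T_a^*\colon L^{3/2}([\frac12, 2])\to L^{3/2}(\R^2)$ has the same operator norm, and by Fubini
\[
\int_{1/2}^2 w(r)\, T_a f(r)\, dr = \int_{\R^2} f(x)\, g(x)\, dx,
\]
so $T_a^* w = g$ is exactly the multiplicity function in the dual statement; the bound \eqref{dualWolff} follows with the same constant.

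For the converse \eqref{dualWolff} $\Rightarrow$ \eqref{wolffL3}, I would start from a nonnegative $f\in L^3(\R^2)$ and use the dual representation
\[
\|M_\delta f\|_{L^3([\frac12,2])} = \sup\Bigl\{\int_{1/2}^2 w(r)\, M_\delta f(r)\,dr : w\ge 0,\ \|w\|_{L^{3/2}}=1\Bigr\}.
\]
A standard measurable selection argument (restrict the supremum defining $M_\delta f(r)$ to $a$ in a countable dense subset of $\R^2$ and select a pointwise near-maximizer) produces a measurable $a(r)$ with
\[
M_\delta f(r) \le 2\int_{\R^2} \frac{\mathbf{1}_{C_{\delta, a(r), r}}(x)}{|C_{\delta, a(r), r}|}\, f(x)\,dx.
\]
Fubini then rewrites $\int w(r)M_\delta f(r)\,dr$ as $2\int f(x)g(x)\,dx$ up to the factor $2$, so H\"older together with \eqref{dualWolff} yields
\[
\int w(r)M_\delta f(r)\,dr \le 2\|f\|_{L^3}\|g\|_{L^{3/2}} \le 2A_\epsilon\delta^{-\epsilon}\|f\|_{L^3}\|w\|_{L^{3/2}},
\]
recovering \eqref{wolffL3} up to a harmless factor (which can be absorbed into $A_\epsilon$).

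The argument is otherwise routine Banach space duality; the only mild technical point is the measurable selection of $a(r)$ in the converse direction, which is standard once one observes that the supremum in $M_\delta f(r)$ is attained (up to an arbitrarily small factor) along a countable dense set of centers, making $M_\delta f$ a pointwise countable supremum of continuous functions of $r$.
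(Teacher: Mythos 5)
Your argument is correct and follows essentially the same route as the paper: in one direction you linearize the maximal function and apply $(L^3,L^{3/2})$ duality, and in the other you recognize $g$ as the adjoint of the linearized averaging operator applied to $w$. The only difference is that you spell out the measurable selection of the near-maximizing centers $a(r)$, a technical point the paper's proof passes over silently with the phrase ``for an appropriate $a(r)$.''
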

\begin{proof}
Suppose that \eqref{wolffL3} holds. By duality, for an appropriate $f\in L^3(\R^2,dz)$ with $\|f\|_3 = 1$,
\begin{align*}
\|m\|_{L^{3/2}(\R^2,dz)} &= \int_{\R^2} m(z)f(z)\,dz \\
&= \int_{\R^2}\bigg(\int_1^2 w(r)\frac{C_{\delta, a(r), r}(z)}{|C_{\delta, a(r), r}|}\,dr\bigg)f(z)\,dz \\
&= \int_1^2 w(r) \bigg(\frac{1}{|C_{\delta,a(r),r}|}\int_{C_{\delta,a(r),r}}f(z)\,dz\bigg)\,dr \\
&\le \int_1^2 w(r) M_\delta f(r)\,dr \\
&\le \|w\|_{L^{3/2}([1,2],dr)}\|M_\delta f\|_{L^{3}([1,2],dr)} \\
&\le A_\epsilon \delta^{-\epsilon}\|w\|_{L^{3/2}([1,2],dr)}.
\end{align*} 
Likewise, if \eqref{dualWolff} holds, then by linearizing the maximal function, given $f\in L^{3}(\R^2,dz)$, for an appropriate $a(r)$ we have
\[
M_\delta f(r) = \frac{1}{|C_{\delta,a(r),r}|}\int_{C_{\delta,a(r),r}}|f(z)|\,dz.
\]
By duality, for an appropriate $w\in L^{3/2}([1,2],dr)$ with $\|w\|_{3/2} = 1$,
\begin{align*}
\|M_\delta f\|_{L^{3}([1,2],dr)} &= \int_1^2 M_\delta f(r) w(r)\,dr \\
&= \int_1^2\bigg(\frac{1}{|C_{\delta,a(r),r}|}\int_{C_{\delta,a(r),r}}|f(z)|\,dz\bigg)w(r)\,dr \\
&= \int_{\R^2}|f(z)|\bigg(\int_1^2 w(r)\frac{C_{\delta,a(r),r}}{|C_{\delta,a(r),r}|}(z)\,dr\bigg)\,dz \\
&\le \|f\|_{L^3(\R^2,dz)}\|m[w,a]\|_{L^{3/2}(\R^2,dz)}\\
&\le A_\epsilon \delta^{-\epsilon}\|f\|_{L^3(\R^2,dz)}.
\end{align*}
\end{proof}

The dual form of Wolff's circular maximal function estimate implies the following concrete formulation in terms of the multiplicity of a collection $\delta$-annuli.

\begin{example}\label{ex:wolff}
    Let $\{r_i\}_{i=1}^N\subset[1,2]$ be a $10\delta$-separated set of radii (not necessarily maximal). For real numbers $b_i$, set 
    \[
    w(r) = \sum_i b_i1_{[r_i,r_i+\delta]}(r),
    \]
    and for each $i$, let $a_i\in\R^2$ be an arbitrary point. Set
    \[
    a(r) = \sum_i a_i1_{[r_i,r_i+\delta]}(r).
    \]
    By definition,
    \[
    m[w,a](z)\approx \sum_{i}b_iC_{\delta,a_i,r_i}(z),
    \]
    where we remind of the notation $C(z) = 1_{C}(z)$. Hence Proposition \ref{prop:dual-wolff}, the dual form of Wolff's circular maximal estimate, implies
    \[
    \int_{\R^2} (\sum_{i=1}^Nb_iC_{\delta,a_i,r_i}(z))^{3/2}\,dz \lesssim \delta^{-\epsilon}\delta\sum_{i=1}^N|b_i|^{3/2}.
    \]
    As a special case, setting each $b_i = 1$ we get
    \[
    \int_{\R^2} (\sum_{i=1}^NC_{\delta,a_i,r_i}(z))^{3/2}\,dz \lesssim \delta^{-\epsilon}\delta N.
    \]
\end{example}

Dual circular maximal function estimates like Wolff's original estimate in the form of Example \ref{ex:wolff} or Pramanik--Yang--Zahl's generalization in Theorem \ref{thm:pramanik-yang-zahl} can be translated into estimates for the following $\delta$-discretized tangency counting problem.
\begin{prob}[Circle tangencies at scale $\delta$]\label{prob:pairs}
    If $X\subset Q$ is a set of $N$ points and $\delta>0$, estimate the number of ``circle tangencies'' at scale $\delta>0$:
    \[
    CT_\delta(X) = \#\{(x,x')\in X^2: ||\bar x-\bar x'| - |x_3 - x_3'|| < \delta\}.
    \]
    Here we write a point $x\in\R^3$ in coordinates as $x = (\bar x,x_3)\in \R^2\times \R$.
\end{prob}
After discussing preliminaries on circle tangencies, tangency rectangles, and point-circle duality, we will sketch how dual circular maximal function estimates can be used to address Problem \ref{prob:pairs} in Section \ref{sec:geometry-rect-lightplanks}. See Theorem \ref{thm:num-ll-pairs} for the precise version of the circle tangency estimate we will use as a lemma in the proof of Theorem \ref{thm:ortiz-sparse-l1}.

\subsection{Duality constructions}\label{sec:rect-lp} 
 We consider circles in the plane parametrized by their center-radius pairs. Given a point $(a,r)\in\R^2\times(0,\infty)$, we will equivalently regard it as the circle
\[
C_{a,r} = \{z\in\R^2:||z-a|-r|=0\}.
\]
See Figure \ref{fig:point-circ}. Likewise, the $\delta$-neighborhood (in $\R^3$) of the point $(a,r)\in\R^2\times\R$ is identified with the $\delta$-thin annulus
\[
C_{\delta,a,r} = \{z\in\R^2:||z-a|-r|<\delta\}.
\]
\begin{figure}
    \centering
    \includegraphics[width=7cm]{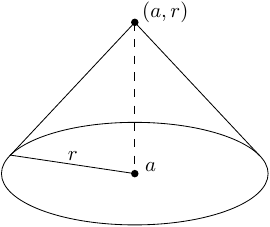}
    \caption{Point-circle duality}
    \label{fig:point-circ}
\end{figure}
Let $\epsilon>0$. We will assume that $\delta < \delta_0(\epsilon)$ is small enough so that $\delta_0^{\epsilon}< 10^{-3}$ to ensure that approximations such as $\cos\theta \sim 1-\theta^2/2$ hold up to constant factors if $|\theta|\le \delta^{\epsilon}$.
We identify points $x = (a,r)$ with their corresponding circles $C_{a,r}$. All the circles we consider will be parametrized by points in $Q=B(e_3,\alpha_0)$ for a small but absolute constant $\alpha_0>0$ unless mentioned otherwise. 

\begin{remark}[Notation]\label{rem:notation}
    We will make use of the following notation. Throughout,  $\epsilon > 0$ is fixed.
    \begin{itemize}
    \item $A\ll B$:  there is a constant $c>0$ so $A\lesssim \delta^{c\sqrt\epsilon}B$.
    \item $A\lessapprox B$:  there is a constant $C>0$ so that $A\lesssim \delta^{-C\epsilon} B$. Note that with this definition, as long as $\delta$ is sufficiently small depending on $\epsilon$, $A\ll B\lessapprox C$ implies $A\ll C$.
    \item $A\approx B$: $A\lessapprox B$ and $B\lessapprox A$ (with possibly different implied constants).

    \item $e_1,e_2,e_3$ will denote the standard basis vectors of $\R^3$.
    \item $\Gamma_0 = \{(a,r)\in\R^2\times\R:||a|-|r||=0\}$ is the lightcone with vertex $0$.
    \item $\Gamma_y = \Gamma_0 + y$ is the lightcone with vertex $y$.
    \item $Q=B(e_3,\alpha_0)$, the ball of radius $\alpha_0$ about $e_3$, which we take as the parameter space of circles.
    \item When we are using point-circle duality, we will not necessarily distinguish between circles and the points of $Q$ that parametrize them. For instance, we may say things like ``given a set of circles in $Q$,''  where we mean precisely consider a subset of $Q$ which we think of as a collection of circles.
    \item If $E$ is a set, we will use $E(x)$ to denote $1_E(x)$, the indicator function of $E$. 
    \item If $x=(a,r),x'=(a',r')\in Q$, then $\Delta(x,x') = ||a-a'|-|r-r'||$ is (up to an absolute constant) the distance from $x$ to $\Gamma_{x'}$, and vice-versa.
    \item If $(E_1,o_1),(E_2,o_2)$ are sets with designated ``centers'' $o_1\in E_1$, $o_2\in E_2$, and $A^{-1}E_1\subset E_2\subset AE_1$ for some $A\approx 1$, where the notation $AE_1$ denotes the dilation of $E_1$ by $A$ about its center $o_1$, then we may write $E_1\asymp E_2$. We will say that such sets are ($A$-)\emph{comparable}.
    \end{itemize}
\end{remark}

Given $X\subset Q$ a set of circles, define the multiplicity functions
\[
m_{\lambda\delta}(z) = \sum_{x\in X}C_{\lambda\delta,x}(z), \quad z\in\R^2,\quad \delta,\lambda>0.
\]
The number $m_{\lambda\delta}(z)$ counts the number of thin annuli the point $z$ belongs to. We want to understand the shape of the large level sets of $m_{\lambda\delta}$ for some small fixed $\delta$, and $\lambda\lessapprox 1$. We can partition these level sets (which are contained in the union of thin annuli) into curvilinear rectangles of width $\lambda\delta$, and variable length $0<\tau\lesssim 1$. It turns out that the range $\sqrt\delta\le \tau\ll 1$ is the most important for us to understand.

\begin{defn}[$\delta,\tau$-rectangle, core circle, center]
For $\delta^{1/2}\le \tau \ll 1$, a \emph{$\delta,\tau$-rectangle} $\Omega$ is the $\delta$-neighborhood of an arc of length $\tau$ on some circle of radius $r\in[1-\alpha_0,1+\alpha_0]$. We will sometimes refer to the implicit circle in this definition as the \emph{core circle} of $\Omega$, and we may write $\Omega = \Omega^{(v)}$ if $v$ is the core circle of $\Omega$. The midpoint of the core arc of $\Omega$ will be referred to as the \emph{center} of $\Omega$.
\end{defn}

\begin{figure}
    \centering
    \includegraphics[width=6cm]{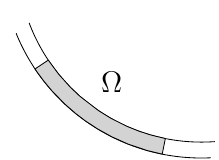}
    \caption{A $\delta,\tau$-rectangle $\Omega$, pictured in gray}
    \label{fig:delta-tau-rect}
\end{figure}

\begin{defn}[Comparable]\label{defn:rectangle-comparable}
We say two $\delta,\tau$-rectangles $\Omega_1,\Omega_2$ are \emph{$A$-comparable} if each is contained in the $A$-dilation of the other about their centers. If $\Omega_1,\Omega_2$ are $A$-comparable for some $A\approx 1$, then we simply say they are \emph{comparable}. If $\Omega_1,\Omega_2$ are not $A$-comparable, we say they are \emph{$A$-incomparable.} A collection $\mathcal R$ of $\delta,\tau$-rectangles is \emph{pairwise $A$-incomparable} if no two members of $\mathcal R$ are $A$-comparable.
\end{defn}
If $\tau\lesssim \delta^{1/2}$, then a $\delta,\tau$-rectangle is approximately a rectangle in the usual sense, while if $\tau$ is much larger than $\delta^{1/2}$, a $\delta,\tau$-rectangle is a curvilinear rectangle.

\begin{defn}[Tangency]\label{defn:tangency}
We say a $\delta,\tau$-rectangle $\Omega$ is \emph{$\lambda$-tangent} to the circle $x$ if $\Omega\subset C_{\lambda\delta,x}$. We let $\mathbf D_{\lambda\delta}(\Omega) = \{x\in Q:\Omega\subset C_{\lambda\delta,x}\}$ be the collection of $\lambda$-tangent circles to $\Omega$ in $Q$.
\end{defn}
Note that in the definition of tangency, we restrict the ``dual set'' $\mathbf D_{\lambda\delta}(\Omega)$ to be contained in $Q = B(e_3,\alpha_0)$. The terminology ``dual set'' will be appropriately justified in Theorem \ref{thm:rect-lp-duality}.

We record the following useful and easily proved facts about taking tangency.
\begin{prop}[Properties of tangency]\label{prop:simple-duality}
For every $\delta>0$, the following hold.
\begin{enumerate}
    \item[(i)] (Monotonicity) If $\Omega'\subset \Omega$, then $\mathbf D_\delta(\Omega)\subset\mathbf D_\delta(\Omega')$.

    \item[(ii)] (Intersection) If $\Omega = \bigcup_k\Omega_k$, then $\mathbf D_\delta(\Omega) = \bigcap_k\mathbf D_\delta(\Omega_k)$
    \item[(iii)] For every $\delta,\tau$-rectangle $\Omega = \Omega^{(v)}$, we have $v\in \mathbf D_\delta(\Omega^{(v)})$ and $B(v,\delta)\cap Q\subset\mathbf D_{10\delta}(\Omega)$. In particular, $\mathbf D_\delta(\Omega^{(v)})\ne\emptyset$ for every $v\in Q$.
    \end{enumerate}
\end{prop}
We will refine property (iii) of Proposition \ref{prop:simple-duality} substantially in Proposition \ref{prop:dual-to-d,t-rectangle}. If $\Omega=\Omega^{(v)}$ is a $\delta,\tau$-rectangle, then the core circle $v$ is $1$-tangent to $\Omega$. Besides $v$, there are other nearby circles $w\in Q$ which are $\approx 1$-tangent to $\Omega$. The set of all such $w$ takes the shape of an ``essentially unique'' $\approx \delta\times\delta\tau^{-1}\times\delta\tau^{-2}$-\emph{lightplank} centered on $v$ when regarded as a subset of $\R^3$. To describe the shape of $\mathbf D_{10\delta}(\Omega)$ more precisely, we introduce the following definition.

\begin{defn}\label{defn:lightlike-basis}
A \emph{lightlike basis} for $\R^3$ is an orthonormal basis $\mathcal E = e_m,e_l,e_s$ of $\mathbb R^3$ such that for some $\theta\in[-\pi/2,\pi/2)$, with respect to the standard basis of $\R^3$,
\[
e_m = \begin{pmatrix}-\sin\theta \\ \cos\theta \\ 0\end{pmatrix},e_l = \frac{1}{\sqrt 2}\begin{pmatrix}-\cos\theta \\ -\sin\theta \\ 1\end{pmatrix},e_s = \frac1{\sqrt 2}\begin{pmatrix}\cos\theta \\ \sin\theta \\ 1\end{pmatrix}.
\]
A \emph{lightlike coordinate system} $(x_m,x_l,x_s),o$ is the Cartesian coordinate system with respect to a lightlike basis with the point $o\in\R^3$ as the designated origin, i.e., for any $x$ in $\R^3$, $x = o + x_me_m + x_le_l + x_se_s$. If we want to emphasize the angle $\theta$, we will write $x_m(\theta),x_l(\theta),x_s(\theta),e_m(\theta),e_l(\theta),e_s(\theta)$.
\end{defn}
Note that with this definition, a lightlike basis is ordered and right-handed, and in particular, it is completely determined by the first basis vector.

\begin{defn}[Lightplank, comparable, essentially unique]\label{def:lightplank}
    Let $\delta^{1/2}\le \tau\ll 1$. A rectangular parallelepiped $P\subset\R^3$ is a $\approx \delta\times\delta\tau^{-1}\times\delta\tau^{-2}$-$\emph{lightplank}$ if the edge lengths of $P$ are $A_1\delta\le A_2\delta\tau^{-1}\le A_3\delta\tau^{-2}$ for numbers $A_1,A_2,A_3\approx 1$, and if for some lightlike basis $e_m,e_l,e_s$ of $\R^3$, the edges of $P$ are parallel to the vectors $e_m,e_l,e_s$, in the order ``intermediate, long, short.'' We will adopt a similar notation as with $\delta,\tau$-rectangles where we write $P = P^{(v)}$ if $v\in \R^3$ is the center of $P$.

    Two lightplanks $P,P'$ are \emph{comparable} if they are both contained in the $A$-dilation of the other for some $A\approx 1$.

    We say a lightplank $P$ satisfying a property $\pi$ is \emph{essentially unique} if any other lightplank $P'$ which also satisfies $\pi$ is comparable to $P$.
\end{defn}

The following calculation of $\mathbf D_{10\delta}(\Omega)$ when $\Omega$ is a $\delta,\sqrt\delta$-rectangle will be the basis for the calculation of $\mathbf D_{\lambda\delta}(\Omega)$ when $\Omega$ is a $\delta,\tau$-rectangle, and $\lambda\approx 1$, $\tau \gg \sqrt\delta$.
\begin{prop}[Dual of $\delta,\sqrt\delta$-rectangle]\label{prop:root-delta}
Let $o = e_3$ and $\Omega^{(o)} = [1-\delta,1+\delta]\times[-\sqrt\delta/2,\sqrt\delta/2]$. Let
\[
e_m = e_2, \quad e_l = \frac{-e_1 + e_3}{\sqrt 2}, \quad e_s = \frac{e_1 + e_3}{\sqrt 2}
\]
be a lightlike basis for $\R^3$, and let $(x_m,x_l,x_s),o$ be the associated lightlike coordinate system. Recall that $Q = B(e_3,\alpha_0)$. If $C$ is a sufficiently large absolute constant, then the following hold. 
\begin{enumerate}
    \item[(i)] If $x\in P^{(o)}\cap Q$, where
    \[
    P^{(o)} = \{(x_m,x_l,x_s):|x_m|\le \sqrt\delta, |x_l|\le 1,|x_s|\le \delta\}
    \]
    then $x\in C\cdot \mathbf D_{10\delta}(\Omega^{(o)})$, the dilation of $\mathbf D_{10\delta}(\Omega^{(o)})$ about $o$.
    \item[(ii)] If $x\in \mathbf D_{10\delta}(\Omega^{(o)})$, then $x \in Q\cap CP^{(o)}$, where $CP$ is the dilation of $P$ by a factor of $C$ about its center $o$.
\end{enumerate}
\end{prop}
\begin{remark}
    Proposition \ref{prop:root-delta} shows in what precise sense the $\delta,\sqrt\delta$-rectangle $\Omega^{(o)}$ is dual to ``the'' $\delta\times\sqrt\delta\times1$-lightplank $P^{(o)}$: any other lightplank $P'$ which satisfies (i) and (ii)  is $O(1)$-comparable to $P^{(o)}$. Another way to say it is that there is an essentially unique $\approx\delta\times\sqrt\delta\times 1$-lightplank satisfying (i) and (ii) in the sense of Definition \ref{def:lightplank}. 
\end{remark}
\begin{proof}[Proof of Proposition \ref{prop:root-delta}]
    First we prove (i). The reader can verify that
    \[
    \big(\mathbf D_{10\delta}(\Omega^{(o)}),o\big)\asymp \big(\mathbf D_{C\delta}(\Omega^{(o)}),o\big),\quad C>1,
    \]
    so it suffices to show $P^{(o)}\subset \mathbf D_{C\delta}(\Omega^{(o)})$ for absolute $C$.
    
    By definition of $P^{(o)}$ and the lightlike coordinate system $(x_m,x_l,x_s),o$, it is clear that $P^{(o)}\subset Q$. Let $x\in P^{(o)}$, and let $(x_1,x_2,x_3)$ be the usual Euclidean coordinates of $x$ with respect to the standard basis $e_1,e_2,e_3$. To verify $x\in\mathbf D_{C\delta}(\Omega^{(o)})$, let $(a_1,a_2) \in\Omega^{(o)}$ be arbitrary, and consider the quantity
    \begin{equation}\label{eqn:lightplank-dual-i}
    I = |(x_1-a_1)^2 + (x_2-a_2)^2 - x_3^2|.
    \end{equation}
    We need to show that $I\le C\delta$ for a constant that does not depend on $(a_1,a_2)$. Let $\bar x = (x_1,x_2)$, and $a_1 = 1 + h$ with $|h|\le \delta$. Expanding Equation \eqref{eqn:lightplank-dual-i} and applying the triangle inequality,
    \begin{align*}
    I &= |\big((x_1 - 1)^2 + x_2^2 - x_3^2\big) + h^2 -2(x_1-1)h - 2x_2a_2 + a_2^2|\\
    &\le \big(||\bar x-e_1|-x_3|\cdot||\bar x-e_1|+x_3|\big)+\delta^2 + 2\delta|x_1-1| + 2\sqrt\delta|x_2| + \delta.
    \end{align*}
    Now we use that $x\in P^{(o)}$ to make some estimates of this last expression. First, $||\bar x-e_1|-x_3| = \Delta(x,e_1) \le C\delta$, because $\Delta(x,e_1)$ is equal to the distance from $x$ to the lightcone with vertex $e_1$ within constant multiplicative factors. The quantity $||\bar x-e_1|+x_3| < 10$ because $x \in Q$. Similarly, $|x_1-1|\le 2$. Note $|x_2| = |x_m| \le \sqrt\delta$, so $2\sqrt\delta|x_2|\le 2\delta$. Altogether this shows that $I\le C\delta$, as desired.

    Now we prove (ii). Let $\ell = \{e_1 + t(e_3-e_1):t\in\R\}$ be a lightray intersecting $e_1$. Consider the infinite $2\delta\times\sqrt\delta$ rectangular prism $R$ we get by sliding $\Omega^{(o)}$ along $\ell$. (Concretely, $R$ is the Minkowski sum $e_1 + (\ell-e_1) + (\Omega^{(o)}-e_1)$.) If $x\in Q$, then for large enough $C>1$ (independent of $\delta$), the dilation $CR$ will contain $P^{(o)}$. This shows that if $x\in \mathbf D_{10\delta}(\Omega^{(o)})$, then $x\in Q\cap CP^{(o)}$ for an appropriate $C>1$.
\end{proof}

\begin{remark}[Our favorite position, coordinates]\label{rem:favorite-posn}
    The coordinate system and position of $\Omega^{(o)}$ in Proposition \ref{prop:root-delta} are so convenient for computations that we will say a $\delta,\tau$-rectangle $\Omega^{(o)}$ is \emph{in our favorite position} if $o = e_3$, and the center of $\Omega^{(o)}$ as defined in Definition \ref{def:lightplank} is $e_1$.

    If $P^{(o)}$ is a $\delta\times\delta\tau^{-1}\times\delta\tau^{-2}$-lightplank, we say $P^{(o)}$ is \emph{in our favorite position} if $o = e_3$ is the center of $P^{(o)}$, and if the intermediate axis of $P^{(o)}$ is parallel to $e_m:=e_2$, the long axis is parallel to $e_l:=\frac{-e_1+e_3}{\sqrt 2}$, and the short axis is parallel to $e_s:=\frac{e_1+e_3}{\sqrt 2}$.

    We say $(x_m,x_l,x_s),o$ is \emph{our favorite lightlike coordinate system.}
\end{remark}

By changing coordinates, we can study any $\delta,\sqrt\delta$-rectangle by first changing coordinates so that the transformed rectangle is in our favorite position, applying Proposition \ref{prop:root-delta}, and then transforming back to the original coordinates. Alternatively, we have the following ``coordinate-invariant'' description of an essentially unique dual lightplank to a $\delta,\sqrt\delta$-rectangle in the plane.

\begin{construction}[Dual of $\delta,\sqrt\delta$-rectangle]\label{const:dual-root-delta}
    Proposition \ref{prop:root-delta} describes how to construct an essentially unique $\delta\times\sqrt\delta\times 1$-lightplank dual to a $\delta,\sqrt\delta$-rectangle in our favorite position. Now we consider more arbitrary $\delta,\sqrt\delta$-rectangles.

    Let $\Omega^{(v)}$ be an arbitrary $\delta,\sqrt\delta$-rectangle in the plane with core circle $v\in Q = B(e_3,\alpha_0)$ and center $c_\Omega$. Let $e_m$ be a unit vector in $\R^2$ parallel to the long edge of $\Omega^{(v)}$, forming an angle in $[-\pi/2,\pi/2)$ with the standard basis vector $e_1$. Consider the lightlike basis $e_m,e_l,e_s$ determined by $e_m$ and the lightrays $\ell_+=c_\Omega + \R e_l,\ell_-=c_\Omega+\R e_s$ passing through $c_\Omega$.
    
    Define infinite $\delta\times\sqrt\delta$ rectangular prisms $R_+,R_-$ by
    \[
    R_+ = \Omega^{(v)}-c_{\Omega} + \ell_+,\quad R_- = \Omega^{(v)}-c_\Omega + \ell_-.
    \]
    Since $v\in Q$, $v$ lies in exactly one of the sets
    \[
    Q\cap R_+, \quad Q\cap R_-.
    \]
    By relabeling $R_+,R_-$ if necessary, we may assume $v\in Q\cap R_+$. Then $P^{(v)}=R_+\cap 2Q$ is an essentially unique $\approx \delta\times\sqrt\delta\times 1$-lightplank $P^{(v)}$ satisfying $P^{(v)}\asymp \mathbf D_{10\delta}(\Omega^{(v)})$.
\end{construction}

It will be convenient for future computations to describe how different lightlike coordinates are related when $o$ is the same.
\begin{prop}[Change of coordinates, fixed $o$]\label{prop:change-of-coordinates}
    If $\theta\in[-\pi/2,\pi/2)$ and $o\in Q$ is fixed, the following relationship between the lightlike coordinates $(x_m(\theta),x_l(\theta),x_s(\theta)),o$ and $(x_m(0),x_l(0),x_s(0)),o$ holds:
    \begin{equation}\label{eqn:change-coord}
        \begin{pmatrix}
            \cos \theta & \dfrac{-\sin\theta}{\sqrt 2} & \dfrac{\sin\theta}{\sqrt 2} \\
            \dfrac{\sin\theta}{\sqrt2} & \dfrac{1+\cos\theta}{2} & \dfrac{1-\cos\theta}{2} \\
            \dfrac{-\sin\theta}{\sqrt 2} & \dfrac{1-\cos\theta}{2} & \dfrac{1+\cos\theta}{2}
        \end{pmatrix}
        \begin{pmatrix}
            x_m(\theta) \\ x_l(\theta) \\ x_s(\theta)
        \end{pmatrix}
        =
        \begin{pmatrix}
            x_m(0) \\ x_l(0) \\ x_s(0)
        \end{pmatrix}.
    \end{equation}

\end{prop}
\begin{proof}
    Since the lightlike bases $e_m(\theta),e_l(\theta),e_s(\theta)$ and $e_m(0),e_l(0),e_s(0)$ are orthonormal, the proof is just the calculation of the nine inner products $\langle e_m(\theta),e_m(0)\rangle$, $\langle e_l(\theta),e_m(0)\rangle,\dots$, etc.
\end{proof}

The next Proposition is the first of our dictionary results which does not appear in Wolff's work \cite{wolff2000local}.

\begin{prop}[Dual of $\delta,\tau$-rectangle]\label{prop:dual-to-d,t-rectangle}
Let $\delta^{1/2}<\tau\ll 1$, and let $\Omega^{(o)}$ be a $\delta,\tau$-rectangle with core circle $o=e_3$ in our favorite position. In our favorite lightlike coordinate system $(x_m,x_l,x_s),o$, let
\[
P^{(o)} = \{(x_m,x_l,x_s):|x_m|\le \delta\tau^{-1},|x_l|\le \delta\tau^{-2},|x_s|\le\delta\}.
\]
Recall that $Q = B(e_3,\alpha_0)$. If $C$ is a sufficiently large absolute constant, then the following hold.
\begin{enumerate}
    \item[(i)] If $x\in P^{(o)}\cap Q$, then $x\in C\cdot\mathbf D_{10\delta}(\Omega^{(o)})$.
    \item[(ii)] If $x\in \mathbf D_{10\delta}(\Omega^{(o)})$, then $x\in Q\cap CP^{(o)}$, where $CP^{(o)}$ is the dilation of $P^{(o)}$ by a factor of $C$ about its center, $o$.
\end{enumerate}
\end{prop}
\begin{proof}[Proof snapshot]
See Figure \ref{fig:dual-lightplank}. Assume $\Omega^{(o)}$ is in our favorite position and take the intersection of lightplanks comparable to $\mathbf D_{10\delta}(\Omega_k^{(o)})$ from Construction \ref{const:dual-root-delta} for some $\delta,\sqrt\delta$-rectangles $\{\Omega_k^{(o)}\}_k$ which cover $\Omega^{(o)}$ but not $2\Omega^{(o)}$.
\end{proof}
\begin{figure}
    \centering
    \includegraphics[width=10cm]{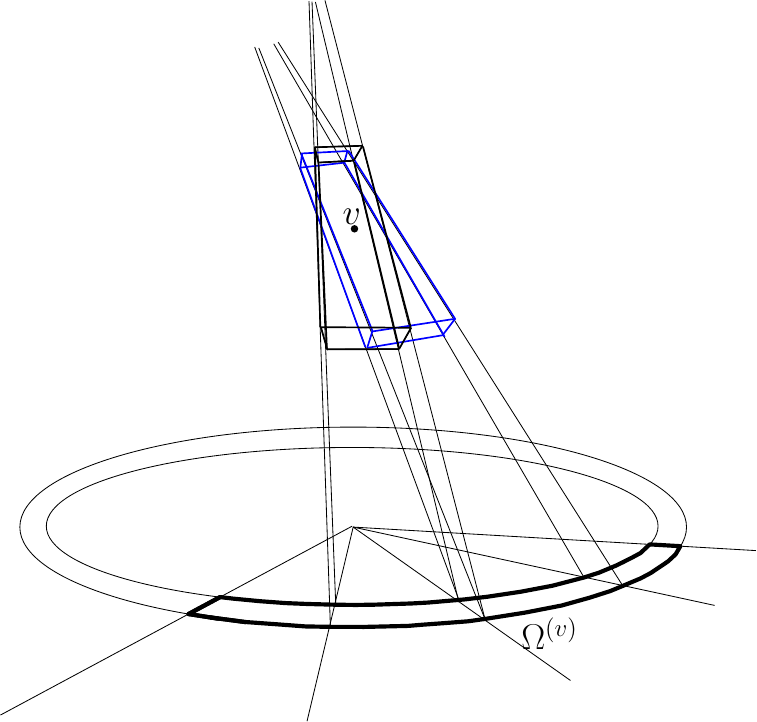}
    \caption{A snapshot of ``the'' dual $\delta\times\delta\tau^{-1}\times\delta\tau^{-2}$-lightplank to a $\delta,\tau$-rectangle $\Omega^{(v)}$}
    \label{fig:dual-lightplank}
\end{figure}

\begin{proof}
    First we prove (i). We begin by making a reduction based on taking tangency in Proposition \ref{prop:simple-duality}. Let $\tau<N\sqrt\delta<2\tau$, and for each $k\in \mathbb Z$, $|k|\le N$, consider the $\delta,\sqrt\delta$-rectangle
    \[
    \Omega_k^{(o)} = \{z\in\R^2:||z|-1|<\delta,|z-e^{ik\sqrt\delta}|<\frac12\sqrt\delta\}, \quad k\in\mathbb Z, |k|\le N.
    \]
    and let
    \[
    \overline\Omega^{(o)}_N = \bigcup_{|k|\le N}\Omega_k^{(o)}.
    \]
    It is clear from the definition of $N$ that $\Omega^{(o)}\subset\overline\Omega^{(o)}_N$ so by the monotonicity property of Proposition \ref{prop:simple-duality}, we have $\mathbf D_{10\delta}(\overline\Omega^{(o)}_N)\subset\mathbf D_{10\delta}(\Omega^{(o)})$. It thus suffices to show
    \begin{equation}\label{eqn:duality-delta-tau}
    P^{(o)}\subset C\cdot \mathbf D_{10\delta}(\overline\Omega^{(o)}_N)
    \end{equation}
    for absolutely large $C$.
    By the intersection property of Property \ref{prop:simple-duality}, it is sufficient to show
    \[
    P^{(o)}\subset C\cdot \bigcap_{k=1}^N \mathbf D_{10\delta}(\Omega_k^{(o)}).
    \]
    By Construction \ref{const:dual-root-delta} or Proposition \ref{prop:root-delta}, for each $|k|\le N$, let $P_k^{(o)}$ be an essentially unique $\delta\times\sqrt\delta\times 1$-lightplank $P_k^{(o)}$ such that
    \[
    P_k^{(o)}\subset C\cdot \mathbf D_{10\delta}(\Omega_k^{(o)})
    \]
    for absolutely large $C$ independent of $k$. It follows that
    \[
    \bigcap_{k=1}^N P^{(o)}_0\cap P^{(o)}_{k}\cap P^{(o)}_{-k} \subset C\cdot \bigcap_{|k|\le N}\mathbf D_{10\delta}(\Omega_k^{(o)})
    \]
    for absolutely large $C$. Finally, we have reduced (i) to showing
    \[
    P^{(o)}\subset C\cdot \bigcap_{k=1}^NP^{(o)}_0\cap P^{(o)}_{k}\cap P^{(o)}_{-k}.
    \]
    
     \textbf{Claim.} For each $1\le k\le N$, in our favorite lightlike coordinate system $(x_m,x_l,x_s),o$ (see Remark \ref{rem:favorite-posn}),
    \[
    P_0^{(o)}\cap P_k^{(o)} \cap P_{-k}^{(o)}\asymp \{|x_m|\le k^{-1}\sqrt\delta,|x_l|\le k^{-2},|x_s|\le \delta\},
    \] 
    with an absolute constant of comparability.

    Given the claim, the proof of (i) follows because $N\sqrt\delta \sim \tau$, so
    \begin{align*}
    P^{(o)} &= \{|x_m|\le \delta\tau^{-1},|x_l|\le \delta\tau^{-2},|x_s|\le\delta\}\\
    &\asymp\{|x_m| \le N^{-1}\sqrt\delta,|x_l|\le N^{-2},|x_s|\le \delta\} \\
    &\asymp P_0^{(o)}\cap P_N^{(o)}\cap P_{-N}^{(o)}\\
    &\asymp \bigcap_{k=1}^NP_0^{(o)}\cap P_k^{(o)} \cap P_{-k}^{(o)} \subset C\cdot  \bigcap_{|k|\le N} \mathbf D_{10\delta}(\Omega_k^{(o)}).
    \end{align*}
    The third $\asymp$ in the chain above holds because the sets $\{|x_m|\le k^{-1}\sqrt\delta,|x_l|\le k^{-2},|x_s|\le\delta\}\asymp P_0^{(o)}\cap P_k^{(o)}\cap P_{-k}^{(o)}$ are decreasing as $k$ increases.

    To prove the claim, let $\theta\in \{\pm k\sqrt\delta\}$ and consider the lightlike coordinate system $(x_m(\theta),x_l(\theta),x_s(\theta)),o$. The lightplank $P_{\frac{\theta}{\sqrt\delta}}^{(o)}$ is contained in the $\delta$-neighborhood of the affine plane
    \[
    \Pi_{\frac{\theta}{\sqrt\delta}}=o+\{x\in\R^3:\langle x,e_s(\theta)\rangle = 0\}.
    \]
    To see how $P_{\frac{\theta}{\sqrt\delta}}^{(o)}$ intersects $P_0^{(o)}$, we just need to determine the $(x_m,x_l,x_s),o$-coordinates of the four ``corner points'' $x\in \Pi_{\frac{\theta}{\sqrt\delta}}\cap \partial P_0^{(o)}$ (see Figure \ref{fig:lightplank-plane-points}).
    \begin{figure}
        \centering
        \includegraphics[width=13cm]{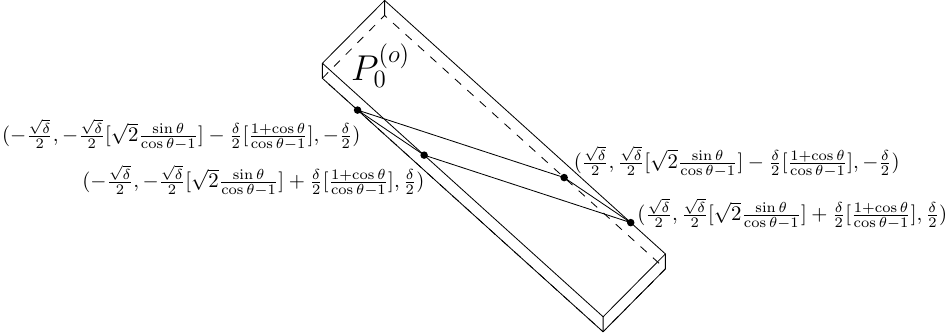}
        \caption{The four ``corner points'' of $P^{(o)}_0\cap P^{(o)}_{\frac{\theta}{\sqrt\delta}}$}
        \label{fig:lightplank-plane-points}
    \end{figure}
    The calculation to determine the coordinates shown in Figure \ref{fig:lightplank-plane-points} is not difficult, but it is lengthy, so we just describe the method we use to obtain the coordinates here. The four marked points in Figure \ref{fig:lightplank-plane-points} are the ``corner points'' where the plane $\Pi_{\frac{\theta}{\sqrt\delta}}$ intersects $P_0^{(o)}$ on its faces $x_m = \pm\frac{\sqrt\delta}{2}$ and $x_s = \pm\frac{\delta}{2}$. We use Equation \eqref{eqn:change-coord} to solve for $x_m(\theta),x_l(\theta)$ (we know $x_s(\theta) = 0$ by definition of the plane $\Pi_{\frac{\theta}{\sqrt\delta}}$) in terms of the known values for $x_m,x_s$, and then use Equation \eqref{eqn:change-coord} again with $x_m=\pm\frac{\sqrt\delta}{2},x_s=\pm\frac{\delta}{2}$ to determine the value of $x_l$ for each of these four points.

    By computing these points for each $\theta\in\{\pm k\sqrt\delta\}$, we can find the region of overlap $P_0^{(o)}\cap P_k^{(o)}\cap P_{-k}^{(o)}$ in the $(x_m,x_l,x_s),o$-coordinates---see Figure \ref{fig:inner-lightplank}.

    The upshot is that for each $1\le k\le N$ we have
    \begin{align*}
    P_0^{(o)}\cap P_k^{(o)} \cap P_{-k}^{(o)}&\asymp \{|x_m|\le k^{-1}\sqrt\delta,|x_l|\le k^{-2},|x_s|\le \delta\},
    \end{align*}
    which finishes the proof of the claim, and the proof of (i) that $P^{(o)}\cap Q\subset C\cdot  \mathbf D_{10\delta}(\Omega^{(o)})$ provided $C$ is sufficiently large.
    \begin{figure}
        \centering
        \includegraphics[width=7cm]{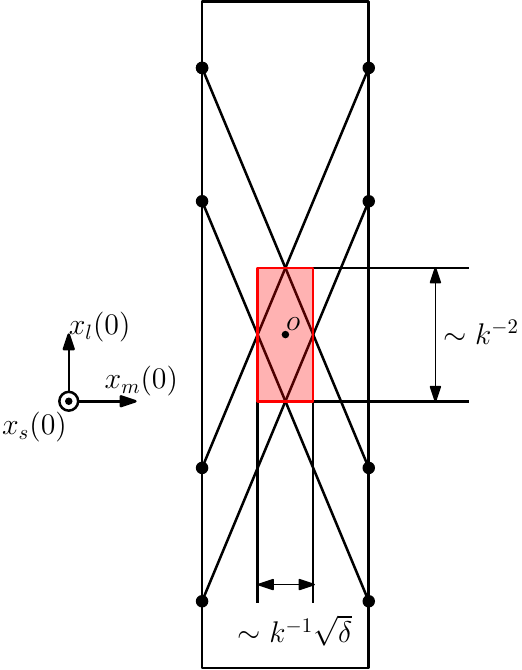}
        \caption{The dimensions of the intersection $P_0^{(o)}\cap P_k^{(o)}\cap P_{-k}^{(o)}$}
        \label{fig:inner-lightplank}
    \end{figure}

    The proof of (ii) now follows partly by the work we did to prove (i), so we avoid repeating some of the same details. We choose $N$ so that $\tau/2<N\sqrt\delta<\tau$, and let $\Omega_k^{(o)}$ for $|k|\le N$ be $\delta,\sqrt\delta$-rectangles as in the proof of (i). Let $\underline\Omega^{(o)}_N = \bigcup_{|k|\le N}\Omega^{(o)}_k$ so $\mathbf D_{10\delta}(\Omega^{(o)})\subset \mathbf D_{10\delta}(\underline\Omega^{(o)}_N)$ by monotonicity. By the intersection property of taking tangency, we have
    \begin{equation}\label{eqn:x-contained-duality}
        \mathbf D_{10\delta}(\Omega^{(o)})\subset \bigcap_{|k|\le N}\mathbf D_{10\delta}(\Omega_k^{(o)}),
    \end{equation}
    so it suffices to show the latter set is contained in $CP^{(o)}$.
    By the calculation we established in the claim from the proof of (i), in our favorite coordinate system, the intersection on the right-hand side of Equation \eqref{eqn:x-contained-duality} is comparable to the $\delta\times N^{-1}\sqrt\delta\times N^{-2}$-lightplank
    \[
    \{|x_m|\le N^{-1}\sqrt\delta,|x_l|\le N^{-2},|x_s|\le\delta\},
    \]
    which is itself $O(1)$-comparable to $P^{(o)}$ by the definition $N\sqrt\delta\sim \tau$. This finishes the proof of (ii), and the proposition.
\end{proof}
We can compute $\mathbf D_{10\delta}(\Omega^{(v)})$ for any particular $\delta,\tau$-rectangle $\Omega^{(v)}$ by translating our coordinates so that the transformed rectangle is in our favorite position, applying Proposition \ref{prop:dual-to-d,t-rectangle}, and then undoing the coordinate transformation. Alternatively, we record the following ``coordinate-invariant'' description of the dual lightplank to a $\delta,\tau$-rectangle analogous to Construction \ref{const:dual-root-delta}.
\begin{construction}[Dual to $\delta,\tau$-rectangle, $\sqrt\delta\le\tau\ll 1$]\label{const:dual-to-delta-t-rectangle}
    Given an arbitrary $\delta,\tau$-rectangle $\Omega^{(v)}$ with $v\in Q$, take the intersection of essentially unique $\delta\times\sqrt\delta\times 1$-lightplanks dual to sub-$\delta,\sqrt\delta$-rectangles contained in $\Omega^{(v)}$. The result is an essentially unique $\delta\times\delta\tau^{-1}\times\delta\tau^{-2}$-lightplank with long edge parallel to the lightray connecting $v$ and the center of $\Omega^{(v)}$.
\end{construction}

\begin{remark}\label{rem:continuous-construction}
    Construction  \ref{const:dual-to-delta-t-rectangle} is ``continuous'' in the sense that if $\Omega^{(v)}$ and $\Omega^{(w)}$ are comparable $\delta,\tau$-rectangles for some $v,w\in Q$, then $\mathbf D_{10\delta}(\Omega^{(v)}),\mathbf D_{10\delta}(\Omega^{(w)})$ are comparable $\approx\delta\times\delta\tau^{-1}\times\delta\tau^{-2}$-lightplanks. A precise version of this remark appears later in Proposition \ref{prop:change-of-basis}.
\end{remark}

Like the sets $\mathbf D_\delta(\Omega)\subset Q$ for $\Omega\subset\R^2$, there is an appropriate ``dual'' for subsets $E\subset Q = B(e_3,\alpha_0)$.
\begin{defn}
If $E\subset Q$, and $\delta>0$, let $$\mathbf D^*_{\delta}(E) = \{z\in\R^2:E\subset \Gamma_{\delta,z}\}$$ where $\Gamma_{\delta,z}$ is the $\delta$-neighborhood of $\Gamma_z = z+\Gamma_0$, and we remind that $\Gamma_0 = \{(a,r)\in\R^2\times\R:||a|-|r||=0\}$ is the lightcone with vertex $0$.
\end{defn}
Like $\mathbf D_\delta$, the map $\mathbf D_\delta^*$ obeys a few simple properties. 
\begin{prop}\label{prop:simple-duality-2}
For every $\delta>0$, the following hold.
\begin{enumerate}
    \item[(i)] (Monotonicity) If $E'\subset E$, then $\mathbf D_\delta^*(E)\subset\mathbf D_\delta^*(E')$.

    \item[(ii)] (Intersection) If $E = \bigcup_kE_k$, then $\mathbf D_\delta^*(E) = \bigcap_k\mathbf D_\delta^*(E_k)$.
    \item[(iii)] For every $\sqrt\delta\le\tau\ll 1$, and every $\delta\times\delta\tau^{-1}\times\delta\tau^{-2}$-rectangle $P$, $\mathbf D_{10\delta}^*(P)\ne\emptyset$.
    
    If $P = P^{(o)}$ is a $\delta\times\delta\tau^{-1}\times\delta\tau^{-2}$ lightplank in our favorite position, and $\Omega^{(o)}$ is a $\delta,\tau$-rectangle in our favorite position, then
    \[
    \mathbf D_{10\delta}^*(P^{(o)})\asymp \Omega^{(o)}.
    \]
    \item[(iv)] (Continuity) For $v,w\in Q$, if $P^{(v)}\asymp P^{(w)}$ then $\mathbf D^*_{10\delta}(P^{(v)})\asymp \mathbf D^*_{10\delta}(P^{(w)})$.
    \end{enumerate}
\end{prop}
\begin{proof}
    Points (i) and (ii) are immediate from the definition.

    Points (iii) and (iv) follow from Proposition \ref{prop:dual-to-d,t-rectangle}.
\end{proof}

To summarize the results of this Section, we record the following Theorem and accompanying Figure \ref{fig:rect-plank-duality}, which justifies our use of the terms ``dual lightplank'' and ``dual rectangle.''
\begin{theorem}[Rectangle-lightplank duality]\label{thm:rect-lp-duality}
    If $\Omega^{(v)}$ is a $\delta,\tau$-rectangle with $v\in Q$, then for appropriate $C\approx 1$,
    \[
    \mathbf D^*_{C\delta}(\mathbf D_{10\delta}(\Omega^{(v)})) \asymp \Omega^{(v)}.
    \]
    Likewise, if $v\in Q$ and $P^{(v)}$ is a $\delta\times\delta\tau^{-1}\times\delta\tau^{-2}$-lightplank, then
    \[
    \mathbf D_{C\delta}(\mathbf D^*_{10\delta}(P^{(v)}))\asymp P^{(v)}.
    \]
\end{theorem}

\begin{figure}
    \centering
    \includegraphics[width=12cm]{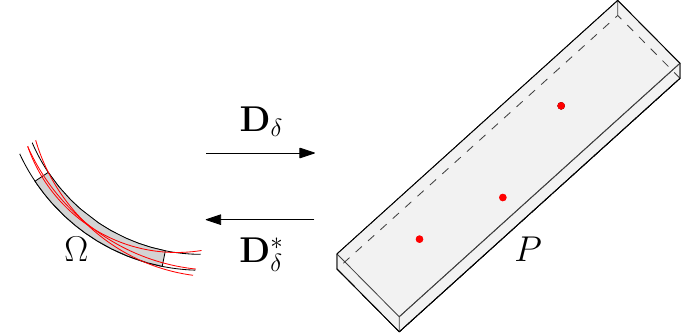}
    \caption{Rectangle-lightplank duality}
    \label{fig:rect-plank-duality}
\end{figure}

Until now, we have considered one rectangle $\Omega^{(v)}$ and a corresponding dual lightplank $P^{(v)}\asymp \mathbf D_{10\delta}(\Omega^{(v)})$. To address the $\delta$-discretized version of Problem \ref{prob:pairs}, we will also have to consider how multiple rectangles and corresponding lightplanks relate to one another.

\subsection{Tangency counting by dual circular maximal estimates}\label{sec:geometry-rect-lightplanks}
With the fundamentals of point-circle duality established, we will sketch how we count circle tangencies using dual circular maximal function estimates as a black box. We assume the family of circles $X$ we consider satisfies the 1-dimensional Frostman non-concentration condition
\begin{equation}\label{assump:circ-NC}
    |X\cap B_r|\lesssim \frac{r}{\delta},\quad\text{for all $r$-balls $B_r\subset\R^3$ and $r\ge \delta$}.
\end{equation}
Recall Pramanik, Yang, and Zahl's estimate from the introduction:
\begin{theorem}[Pramanik--Yang--Zahl \cite{pramanik2022furstenberg}, 2022]\label{thm:pramanik-yang-zahl}
    For each $\epsilon>0$, there is a constant $C_\epsilon$ so the following holds. Suppose $X\subset Q$ is a set of $\delta$-separated circles obeying the 1-dimensional Frostman non-concentration condition \eqref{assump:circ-NC}. Then the following estimate holds:
    \begin{equation}\label{eqn:pyz2}
    \int_{\R^2}\big(\sum_{x\in X}C_{\delta,x}(z)\big)^{3/2}\,dz \le C_\epsilon \delta^{-\epsilon}\delta|X|.
    \end{equation}
\end{theorem}
 
First we will sketch how we plan to use Theorem \ref{thm:pramanik-yang-zahl} to address Problem \ref{prob:pairs}. It will take some justification (see Proposition \ref{prop:main-geom}), but the integral appearing on the left-hand side of \eqref{eqn:pyz2} controls the cardinality of any maximal pairwise incomparable collection $\mathcal R$ of $\delta,\tau$-rectangles $\Omega$ contained in $\bigcup_{x\in X}C_{\delta,x}$ satisfying 
\begin{equation}\label{eqn:rich-rectangles}
    |X\cap \mathbf D_{10\delta}(\Omega)|\approx\mu,\quad\text{for all $\Omega\in \mathcal R$}
\end{equation}
to the effect of 
\begin{equation}\label{eqn:max-controls-rect}
    \int_{\bigcup\mathcal R}\big(\sum_{x\in X}C_{10\delta,x}(z)\big)^{3/2}\,dz\ge \mu^{3/2}\int_{\bigcup\mathcal R}\sum_{\Omega\in\mathcal R}\Omega(z)\,dz = \mu^{3/2}\delta\tau|\mathcal R|.
\end{equation}
If $v,w\in X$ is a pair of points that is $D$-separated, and nearly lightlike separated, meaning $\Delta(v,w)<\delta$, then $v,w$ belong to a common $\approx\delta\times \sqrt{D\delta}\times D$-lightplank $P$. Therefore, if
\[
\mathcal L_{D,<\delta}(X)=\{(v,w)\in X\times X:d(v,w)\sim D, \Delta(v,w)<\delta\},
\]
then we have $CT_\delta(X) = \sum_{\delta < D\lesssim 1}|\mathcal L_{D,<\delta}|$, and 
\[
|\mathcal L_{D,<\delta}(X)|\le \sum_{\substack{P:\ \text{incomparable}\\\delta\times\sqrt{D\delta}\times D\ \text{lightplanks}}}|X\cap P|^2.
\]
For each $\delta<D\lesssim 1$, write $D = \delta\tau^{-2}$ for some $\sqrt\delta\le \tau\le 1$. For $\tau$ fixed, pigeonhole a value $\mu$ and a maximal pairwise incomparable collection $\mathcal P_\mu$ of $\delta\times\delta\tau^{-1}\times\delta\tau^{-2}$ lightplanks $P$ with $|X\cap P|\approx\mu$ such that
\[
\sum_{\substack{P:\ \text{incomparable}\\\delta\times\delta\tau^{-1}\times \delta\tau^{-2}\ \text{lightplanks}}}|X\cap P|^2 \approx \mu^2|\mathcal P_\mu|.
\]
Using rectangle-lightplank duality, the set
\[
\mathcal R(\mathcal P_\mu) := \{\mathbf D^*_{10\delta}(P):P\in\mathcal P_\mu\}
\]
of dual $\approx\delta,\tau$-rectangles $\Omega = \mathbf D_{10\delta}^*(P)$ as $P$ ranges in $\mathcal P_\mu$ is a maximal pairwise incomparable collection of $\approx \delta,\tau$-rectangles satisfying \eqref{eqn:rich-rectangles}. Therefore, applying rectangle-lightplank duality and Equation \eqref{eqn:rich-rectangles},
\begin{align*}
    |\mathcal L_{\delta\tau^{-2},<\delta}(X)|&\lessapprox \sum_{P\in\mathcal P_\mu}|X\cap P|^2\\
    &\le \sup\{|X\cap P|:P\ \text{is a $\delta\times\delta\tau^{-1}\times\delta\tau^{-2}$-lightplank}\}^{1/2}\sum_{P\in\mathcal P_\mu}|X\cap P|^{3/2}\\
    &\approx \mathbf P_\tau(X)^{1/2}\cdot \mu^{3/2}|\mathcal R(\mathcal P_\mu)|,
\end{align*}
where we introduce the shorthand notation
\[
\mathbf P_\tau(X) = \sup\{|X\cap P|:P\ \text{is a $\delta\times\delta\tau^{-1}\times\delta\tau^{-2}$-lightplank}\}.
\]
Now, combining Equations \eqref{eqn:pyz2} and \eqref{eqn:max-controls-rect},
\begin{align*}
|\mathcal L_{\delta\tau^{-2},<\delta}(X)|&\lessapprox \mathbf P_\tau(X)^{1/2}\cdot(\delta\tau)^{-1}\cdot\delta |X|\\
&= \mathbf P_\tau(X)^{1/2}\tau^{-1}|X|,
\end{align*}
Since $\tau\ge\sqrt\delta$, and there are $\approx 1$-many dyadic values of $\tau$,
\begin{equation}\label{eqn:answer-prob-pairs}
    CT_\delta(X) = \sum_{\sqrt\delta\le\tau\le1}|\mathcal L_{\delta\tau^{-2},<\delta}(X)|\lessapprox \mathbf P_{\sqrt\delta}(X)^{1/2}\delta^{-1/2}|X|.
\end{equation}
Inequality \eqref{eqn:answer-prob-pairs} is our answer to Problem \ref{prob:pairs} when $X$ is $1$-dimensional (see Theorem \ref{thm:num-ll-pairs} for the precise statement). The quantity $\mathbf P_{\sqrt\delta}(X)$ depends on the shape of $X$. For example, if $X$ manages to avoid lightplanks, so  $\mathbf P_{\sqrt\delta}(X) = 1$, our bound for $CT_\delta(X)$ is a significant gain over the trivial bound of $|X|^2$. See Figure \ref{fig:diff-weights} for three different 1-dimensional configurations $X$ with different values of $\mathbf P_{\sqrt \delta}(X)$.

\begin{figure}
    \centering
    \includegraphics[width=12cm]{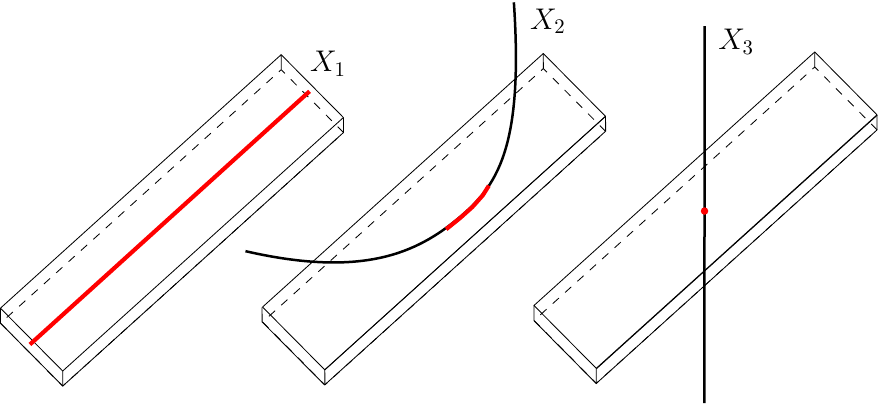}
    \caption{Three sets $X_1,X_2,X_3$ with $\mathbf P(X_1)\gg\mathbf P(X_2)\gg\mathbf P(X_3)$}
    \label{fig:diff-weights}
\end{figure}

 The rest of this section will be devoted to making the proof sketch of inequality \eqref{eqn:answer-prob-pairs} we have just presented rigorous in Theorem \ref{thm:num-ll-pairs}. To keep the exposition in this section clean, we will keep a few geometric lemmas regarding circles satisfying $\Delta(x,x')<\delta$ within Appendix \ref{appendix:rect-lightplank-lemmas}.

\subsubsection*{\textnormal{\textbf{Structure of the argument}}}

To help the reader navigate the steps that follow, we summarize the logical structure of the proof of inequality~(18). Each item corresponds to a key proposition or theorem developed in this section.

\vspace{0.5em}

\noindent\textbf{Proposition \ref{prop:change-of-basis} (Continuity of tangency).}
Shows that if one $\delta,\tau$-rectangle is contained in another, their associated dual lightplanks are comparable. This ensures that tangency sets vary continuously with small perturbations.

\par\vspace{0.75em}
\noindent\textbf{Proposition \ref{prop:incomparable} (Packing incomparable rectangles).}
Controls the number of pairwise incomparable $\delta,\tau$-rectangles that can fit inside a larger rectangle, providing a key geometric pigeonholing tool.

\par\vspace{0.75em}
\noindent\textbf{Propositions \ref{prop:fixed-x-mult}--\ref{prop:tangency-to-occupancy} (Multiplicity and occupancy).}
Relate how many circles are tangent to a rectangle (tangency multiplicity) to the maximum lightplank occupancy $\mathbf P_\tau(X)$, establishing quantitative bounds for use in counting arguments.

\par\vspace{0.75em}
\noindent\textbf{Proposition \ref{prop:main-geom} (Counting high-multiplicity rectangles).}
Uses the Pramanik--Yang--Zahl circular maximal estimate to show that rectangles with large tangency multiplicity must be rare.

\par\vspace{0.75em}
\noindent\textbf{Propositions \ref{prop:nearly-ll}--\ref{prop:both-tangent} (Geometry of nearly lightlike pairs).}
Show that nearly lightlike-separated pairs must lie in a common lightplank and hence correspond to tangency rectangles in the dual setting.

\par\vspace{0.75em}
\noindent\textbf{Theorem \ref{thm:num-ll-pairs}.}
Establishes inequality~\eqref{eqn:answer-prob-pairs} by combining the preceding geometric and counting estimates.
\par\vspace{0.75em}

Our first minor goal is to count the number of $A$-incomparable $\delta,\tau$-rectangles contained within a slightly larger rectangle in Proposition \ref{prop:incomparable}. To do so, we  first give a rigorous proof of Remark \ref{rem:continuous-construction}, which says that the tangency map $\Omega\mapsto \mathbf D_{10\delta}(\Omega)$ is ``continuous'' when $\Omega$ is a $\delta,\tau$-rectangle.

\begin{prop}[Continuity of $\mathbf D_{10\delta}$]\label{prop:change-of-basis}
Suppose $\Omega^{(v)}\subset\overline\Omega^{(w)}$, where $\Omega^{(v)}$ is a $\delta,\tau$-rectangle, and $\overline\Omega^{(w)}$ is an $A\delta,A\tau$-rectangle. Let $P^{(v)} = \mathbf D_{10\delta}(\Omega^{(v)})$ and $\overline P^{(w)} = \mathbf D_{10 A\delta}(\overline\Omega^{(w)})$. Then for an absolute constant $C>1$, $P^{(v)} \subset CA^{C} \overline P^{(w)}$.
\end{prop}
\begin{proof}
Let $\mathcal E = e_m,e_l,e_s$ and $\overline{\mathcal E} = \bar e_m, \bar e_l, \bar e_s$ be the lightlike bases associated with the lightplanks $P^{(v)}$ and $\overline P^{(w)}$, respectively. Let $\theta = \angle e_m,\bar e_m$. Because $\Omega^{(v)}\subset C_{A\delta,w}$, we have $w\in \mathbf D_{A\delta}(\Omega^{(v)})$ by Definition \ref{defn:tangency} of taking tangency.  By Proposition \ref{prop:dual-to-d,t-rectangle}, $\mathbf D_{A\delta}(\Omega^{(v)})\subset CA^CP^{(v)}$ for an appropriate large constant $C$.

Hence, by Proposition \ref{prop:change-of-coordinates}, we have
\[
\begin{matrix}
|\langle v-w,\bar e_s\rangle| & \le &  |\langle v-w, e_s\rangle| &  + & O(\theta) |\langle v-w,e_m\rangle| & + & O(\theta^2) |\langle v-w,e_l\rangle|\\
 & \lesssim  &  A^C\delta & + & O(\theta)A^C\delta\tau^{-1} &  + & O(\theta^2)A^C\delta\tau^{-2}.
\end{matrix}
\]

By Proposition \ref{prop:angle-em}, $|\theta|\lesssim A^C\tau$, so
$|\langle v-w,\bar e_s\rangle| \lesssim A^C\delta$. Analogous considerations using Proposition \ref{prop:change-of-coordinates} and $|\theta|\lesssim A^C\tau$ show $|\langle v-w,\bar e_m\rangle| \lesssim A^C\delta\tau^{-1}$ and $|\langle v-w,\bar e_l\rangle|\lesssim A^C\delta\tau^{-2}$. Since $\overline P^{(w)}$ is an $\approx \delta\times \delta\tau^{-1}\times \delta\tau^{-2}$-lightplank, this shows $v\in CA^C\overline P^{(w)}$. Now that we have shown $v\in CA^C\overline P^{(w)}$, it suffices to prove that for any $x\in P^{(v)}$, the inequalities
\begin{align*}
|\langle x-v,\bar e_s\rangle| &\lesssim A^C\delta\\
|\langle x-v,\bar e_m\rangle| &\lesssim A^C\delta\tau^{-1}\\
|\langle x-v,\bar e_l\rangle| &\lesssim A^C\delta\tau^{-2}
\end{align*}
all hold. We provide the details to estimate $|\langle x-v,\bar e_s\rangle|$ since the proofs of the remaining inequalities are entirely analogous. By Proposition \ref{prop:change-of-coordinates} again, we have
\begin{equation}\label{bar-component}
|\langle x-v,\bar e_s\rangle| \lesssim O(1)|\langle x-v,e_s\rangle| + O(\theta)|\langle x-v,e_m\rangle| + O(\theta^2)|\langle x-v,e_l\rangle|.
\end{equation}
Since $x\in P^{(v)}$, a $\approx \delta\times \delta\tau^{-1}\times\delta\tau^{-2}$-lightplank, we have
\begin{align*}
|\langle x-v,e_s\rangle| &\lesssim \delta\\
|\langle x-v,e_m\rangle| &\lesssim \delta\tau^{-1}\\
|\langle x-v,e_l\rangle| &\lesssim \delta\tau^{-2}.
\end{align*}
Substituting these bounds into \eqref{bar-component} with $|\theta|\lesssim A^C\tau$, we obtain
\[
|\langle x-v,\bar e_s\rangle| \lesssim A^C\delta.
\]
Finally, we use Proposition \ref{prop:change-of-coordinates} and $|\theta|\lesssim A^C\tau$ again to bound $|\langle x-v,\bar e_m\rangle|\lesssim A^C\delta\tau^{-1}$ and $|\langle x-v,\bar e_l\rangle|\lesssim A^C\delta\tau^{-2}$, and this finishes the proof.
\end{proof}

The next proposition is a refinement of Lemma 1.2 in Wolff's paper \cite{wolff2000local}. Since we need to be precise about the size of the final answer, and because Wolff does not include the proof, we include the proof here.

\begin{prop}[Packing incomparable rectangles]\label{prop:incomparable}
For any $A_0\ge 1$, the number of pairwise $A$-incomparable $\delta,\tau$-rectangles contained in an $A_0A\delta,A_0A\tau$-rectangle is $\lesssim (A_0A)^C$.
\end{prop}
\begin{proof}
Let $\overline\Omega^{(o)}$ be an $A_0A\delta,A_0A\tau$-rectangle, and let $\{\Omega^{(v_i)}\}_{i=1}^M$ be a maximal pairwise $A$-incomparable collection of $\delta,\tau$-rectangles contained in $\overline\Omega^{(o)}$. Let $\overline P^{(o)}\asymp \mathbf D_{10A_0A\delta}(\overline\Omega^{(o)})$ be the essentially unique $\approx \delta\times\delta\tau^{-1}\times\delta\tau^{-2}$-lightplank dual to $\overline\Omega^{(o)}$.

Let $\overline{\mathcal E}=\bar e_m,\bar e_l,\bar e_s$ be the lightlike basis associated to the lightplank $\overline P^{(o)}$. Since $\Omega^{(v_i)}\subset \overline\Omega^{(o)}$ for each $i$, by Proposition \ref{prop:change-of-basis}, we have $v_i\in (A_0A)^C\overline P^{(o)}$, so each of the following inequalities holds for every $i,j\in\{1,\dots,M\}$:
\begin{itemize}
\item $|\langle v_i-v_j,\bar e_s\rangle| \lesssim (A_0A)^C\delta$
\item $|\langle v_i-v_j,\bar e_m\rangle| \lesssim (A_0A)^C\delta\tau^{-1}$
\item $|\langle v_i-v_j,\bar e_l\rangle| \lesssim (A_0A)^C\delta\tau^{-2}$.
\end{itemize}
As the circles $v_1,\dots,v_M$ contained in $(A_0A)^C\overline P^{(o)}$ are $A$-incomparable, for each $i \ne j$, at least one of the following inequalities must hold:
\begin{itemize}
\item $|\langle v_i-v_j,\bar e_s\rangle| \gtrsim A^C\delta$
\item $|\langle v_i-v_j,\bar e_m\rangle| \gtrsim A^C\delta\tau^{-1}$
\item $|\langle v_i-v_j,\bar e_l\rangle| \gtrsim A^C\delta\tau^{-2}$.
\end{itemize}
Therefore, $M \lesssim (A_0A)^C$, and the claim is proved.
\end{proof}

For $\delta>0$ fixed, and each $\lambda\approx 1$, we recall the multiplicity function
\[
m_{\lambda\delta}(z) = \sum_{x\in X}C_{\lambda\delta,x}(z),\quad z\in\R^2.
\]
If $v\in Q$ and $\Omega^{(v)}$ is a $\delta,\tau$-rectangle, recall that $v$ is by Definition \ref{defn:tangency} $1$-tangent to $\Omega$, and that by Proposition \ref{prop:dual-to-d,t-rectangle} if $\lambda\approx 1$, the collection $\mathbf D_{\lambda\delta}(\Omega^{(v)})$ is comparable to an essentially unique $\approx\delta\times\delta\tau^{-1}\times\delta\tau^{-2}$-lightplank $P^{(v)}$.
\begin{prop}\label{prop:fixed-x-mult}
There is an absolute constant $C\ge 1$ such that the following holds. Let $\mathcal R$ be an $A$-incomparable collection of $\delta,\tau$-rectangles contained in $\bigcup_{x\in X}C_{\delta,x}$. For each $x\in X$, and each $\lambda \ge A$, let
\[
\mathcal R_{\lambda\delta}(x) = \{\Omega\in\mathcal R:x\in \mathbf D_{\lambda\delta}(\Omega)\}
\]
be the rectangles in $\mathcal R$ which are $\lambda$-tangent to $x$ (see Definition \ref{defn:tangency}). Then for every $z\in\R^2$, we have
\[
\sum_{\Omega\in\mathcal R_{\lambda\delta}(x)}\Omega(z) \lesssim \lambda^C C_{\lambda\delta,x}(z).
\]
\end{prop}
\begin{proof}
The $\delta,\tau$-rectangles $\Omega\in\mathcal R$ satisfying
\[
z\in\Omega\subset C_{\lambda\delta,x}
\]
are all contained in a $\lambda\delta,C\tau$-rectangle. The $\Omega\in\mathcal R$ are pairwise $A$-incomparable, so by Proposition \ref{prop:incomparable} on packing rectangles, the number of such $\Omega$ is at most $\lambda^C$ for some absolute constant $C$.
\end{proof}

\begin{defn}[Multiplicity of a rectangle]\label{def:mult-rect}
    For $\lambda\approx 1$, and a $\delta,\tau$-rectangle $\Omega$, let
    \[
    \mu_{\lambda\delta}(\Omega) = |X\cap \mathbf D_{\lambda\delta}(\Omega)|
    \]
    be the number of points in $X$ that are $\lambda$-tangent to $\Omega$. We refer to $\mu_{\lambda\delta}(\Omega)$ as the \emph{multiplicity} of $\Omega$ (or the \emph{$X$-multiplicity} if we want to emphasize the set $X$ of circles).
\end{defn}

We recall the notation that if $E\subset \R^2$ is a set, we may use $E(z)$ as shorthand for the indicator function $1_E(z)$. Recall the following notation we used in Proposition \ref{prop:fixed-x-mult}: if $\mathcal R$ is any set of $\delta,\tau$-rectangles, then
\[
\mathcal R_{\lambda\delta}(x) = \{\Omega\in\mathcal R: x\in\mathbf D_{\lambda\delta}(\Omega)\}
\]
is the set of rectangles of $\mathcal R$ that are $\lambda$-tangent to $x$. If $a(x,\Omega)$ is any quantity that depends on $x\in X$ and $\Omega\in\mathcal R$, then we have the following double-counting/Fubini relationship:
\[
\sum_{\Omega\in\mathcal R}\sum_{x\in X\cap \mathbf D_{\lambda\delta}(\Omega)}a(x,\Omega) = \sum_{x\in X}\sum_{\Omega\in\mathcal R_{\lambda\delta}(x)}a(x,\Omega).
\]
Before stating the next results, we briefly explain their role in the argument. Propositions~\ref{prop:maximal-mult} and~\ref{prop:tangency-to-occupancy} establish a key link between the number of circles tangent to a rectangle (tangency multiplicity) and the distribution of mass within lightplanks (occupancy). Proposition~\ref{prop:maximal-mult} connects these multiplicities to the multiplicity function $m_{\lambda\delta}$, which we can control using the Pramanik--Yang--Zahl estimate. Proposition~\ref{prop:tangency-to-occupancy} then shows that the maximum tangency multiplicity over all rectangles is comparable to the worst-case lightplank occupancy. Together, these results allow us to reduce tangency counting to estimates involving the lightplank geometry of~$X$.
\begin{prop}\label{prop:maximal-mult}
If $\mathcal R$ is a pairwise $A$-incomparable collection of $\delta,\tau$-rectangles contained in $\bigcup_{x\in X}C_{\delta,x}$ and $\lambda\ge A$, then
\[
\sum_{\Omega\in\mathcal R}\mu_{\lambda\delta}(\Omega)\Omega(z) \lesssim \lambda^Cg_{\lambda \delta}(z) = \lambda^C\sum_{x\in X}C_{\lambda\delta,x}(z),\quad z\in\R^2.
\]
\end{prop}
\begin{proof}
By Definition \ref{def:mult-rect} of $\mu_{\lambda\delta}(\Omega)$ and double-counting,
\begin{align*}
\sum_{\Omega\in\mathcal R}\mu_{\lambda\delta}(\Omega)\Omega(z) &= \sum_{\Omega\in\mathcal R}\big(\sum_{x\in X\cap\mathbf D_{\lambda\delta}(\Omega)}C_{\lambda\delta,x}(z)\big)\Omega(z) \\
&= \sum_{x\in X}C_{\lambda\delta,x}(z)\sum_{\Omega\in\mathcal R_{\lambda\delta}(x)}\Omega(z).
\end{align*}
By Proposition \ref{prop:fixed-x-mult}, for each fixed $x\in X$ and $\lambda \ge A$, the inner sum over $\Omega\in\mathcal R_{\lambda\delta}(x)$ is bounded by $\lambda^C C_{\lambda\delta,x}(z)$. This finishes the proof by the definition of $m_{\lambda\delta}(z)$.
\end{proof}

For $\delta^{1/2}\le \tau\ll 1$, let
\[
T(\tau) = \sup\{\mu_{10\delta}(\Omega):\Omega\subset\R^2\ \text{is a $\delta,\tau$-rectangle}\}
\]
be the maximal number of circles of $X$ that are $10$-tangent to a $\delta,\tau$-rectangle.
\begin{prop}[Maximal tangency number and lightplank occupancy]\label{prop:tangency-to-occupancy}
    For $\delta^{1/2}\le \tau\ll 1$, we have
    \[
    T(\tau) \approx \sup\{|X\cap P|:P\ \text{is a } \delta\times \delta\tau^{-1}\times \delta\tau^{-2}\text{-lightplank}\}.
    \]
\end{prop}
\begin{proof}
    For this proof, let $\mathbf P(\tau)$ denote the maximal $\delta\times\delta\tau^{-1}\times\delta\tau^{-2}$-lightplank occupancy of $X$:
    \[
    \mathbf P(\tau) = \sup\{|X\cap P|:P\ \text{is a } \delta\times \delta\tau^{-1}\times \delta\tau^{-2}\text{-lightplank}\}.
    \]
    Let $\Omega$ be a $\delta,\tau$-rectangle such that $\mu_{10\delta}(\Omega) = T(\tau)$. By Proposition \ref{prop:dual-to-d,t-rectangle} $\mathbf D_{10\delta}(\Omega)$ can be covered by $\approx 1$-many $\delta\times\delta\tau^{-1}\times\delta\tau^{-2}$-lightplanks. By the pigeonhole principle, there exists a $\delta\times\delta\tau^{-1}\times\delta\tau^{-2}$-lightplank $P_0$ such that
    \[
    |X\cap P_0|\gtrapprox |X\cap \mathbf D_{10\delta}(\Omega)|= \mu_{10\delta}(\Omega) = T(\tau).
    \]
    This shows that $\mathbf P(\tau)\gtrapprox T(\tau)$.
    Conversely, let $P$ be a $\delta\times\delta\tau^{-1}\times\delta\tau^{-2}$-lightplank such that $|X\cap P| = \mathbf P(\tau)$. By Proposition \ref{prop:dual-to-d,t-rectangle}, there are $\approx 1$-many $\delta,\tau$-rectangles $\Omega$ which cover $\mathbf D^*_{10\delta}(P)$. By the pigeonhole principle, there exists a $\delta,\tau$-rectangle $\Omega_0$ such that
    \[
    |X\cap \mathbf D_{10\delta}(\Omega_0)|\gtrapprox |X\cap P|=\mathbf P(\tau).
    \]
    This finishes the proof that $T(\tau)\approx\mathbf P(\tau)$.
\end{proof}
\begin{prop}\label{prop:nu-gamma}
If $\Omega$ is a $\delta,\tau$-rectangle, then for each $\lambda \approx 1$,
\[
\mu_{\lambda\delta}(\Omega)\lessapprox T(\tau).
\]
\end{prop}
\begin{proof}
This would follow immediately from the definition of $\mu_{\lambda\delta}(\Omega)$ (with ``$\le$'' in place of ``$\lessapprox$'') if $T(\tau)$ were defined with $\mathbf D_{\lambda\delta}(\Omega)$ instead of $\mathbf D_{10\delta}(\Omega)$. By Proposition \ref{prop:dual-to-d,t-rectangle}, $\mathbf D_{\lambda\delta}(\Omega)$ is comparable to a $\approx\delta\times\delta\tau^{-1}\times\delta\tau^{-2}$-lightplank, which can be covered by $\approx 1$-many translates $\{\mathbf D_{10\delta}(\Omega)+v_i\}_i$, at least one of which must satisfy by the pigeonhole principle,
\[
|X\cap (\mathbf D_{10\delta}(\Omega)+v_i)| \gtrapprox \mu_{\lambda\delta}(\Omega).
\]
Therefore, $T(\tau)\gtrapprox \mu_{\lambda\delta}(\Omega)$.
\end{proof}

For a dyadic number $1\le M\le T(\tau)$, a number $\lambda\approx 1$, and a collection $\mathcal R$ of $\delta,\tau$-rectangles, let
\[
\mathcal R_{M,\lambda} = \{\Omega\in\mathcal R:\mu_{\lambda\delta}(\Omega)\sim M\}.
\]
We are now ready to rigorously justify the proof sketch we presented at the beginning of this Section.
\begin{prop}[Counting tangency rectangles]\label{prop:main-geom}
Suppose $X\subset Q$ is a set of $\delta$-separated circles obeying the 1-dimensional Frostman non-concentration condition
\[
|X\cap B_r|\lesssim \frac{r}{\delta},\quad\text{for all $r$-balls $B_r\subset\R^3$ and $r\ge \delta$}.
\]
If $\mathcal R$ is any pairwise $A$-incomparable collection of $\delta,\tau$-rectangles contained in $\bigcup_{x\in X}C_{\delta,x}$, then for each $M\in[1,T(\tau)]$ and $A\le\lambda\lessapprox 1$,
\[
M^{3/2}|\mathcal R_{M,\lambda}| \lessapprox \tau^{-1}|X|.
\]
\end{prop}
\begin{proof}
By Proposition \ref{prop:maximal-mult}, if $\lambda \ge A$, we have 
\[
\lambda^C g_{\lambda \delta}(z)\gtrsim \sum_{\Omega\in\mathcal R}\mu_{\lambda\delta}(\Omega)\Omega(z),\quad z\in\R^2.
\]
We organize the sum on the right-hand side by the dyadic level sets of $\mu_{\lambda\delta}(\Omega)$, keeping in mind Proposition \ref{prop:nu-gamma}:
\[
\lambda^Cm_{\lambda\delta}(z) \gtrsim \sum_{\substack{1< M < T(\tau)\\M\ \text{dyadic}}}M\sum_{\Omega\in\mathcal R_{M,\lambda}}\Omega(z),\quad z\in \R^2.
\]
By Pramanik--Yang--Zahl's Theorem \ref{thm:pramanik-yang-zahl}, and the embedding $\ell^1\hookrightarrow \ell^{3/2}$,
\[
\delta |X|\gtrapprox \lambda^{C}\int_{\R^2} m_{\lambda\delta}(z)^{3/2}\,dz\gtrsim M^{3/2}|\mathcal R_{M,\lambda}|\cdot|\Omega|.
\]
Dividing by $|\Omega| \sim \delta\tau$ finishes the proof.
\end{proof}

\subsection{Nearly lightlike separated pairs}\label{subsec:pairs}
For dyadic numbers $0<\Delta\le D<1$, define the collection
\[
\mathcal L_{D,\Delta}(X) = \{(v,w)\in X\times X : d(v,w) \sim D, \Delta(v,w) \sim \Delta\}.
\]
We will refer to a pair $(v,w)\in\mathcal L_{D,\Delta}(X)$ as \emph{nearly lightlike separated} when $\Delta\lessapprox\delta$.  A pair of nearly lightlike separated points is the same as a pair of nearly internally tangent circles in terms of point-circle duality. Let $\tau_D = \delta^{1/2}D^{-1/2}$, and recall
\[
T(\tau_D) = \sup\{|X\cap \mathbf D_{10\delta}(\Omega)|:\Omega\subset\R^2\ \text{is a $\delta,\tau_D$-rectangle}\}
\]
is the maximal number of circles of $X$ that are $10$-tangent to any $\delta,\tau_D$-rectangle. The reason for the definition $\tau_D = \delta^{1/2}D^{-1/2}$ is that if $d(v,w)\approx D$, then both $v$ and $w$ belong to an $\approx \delta\times\sqrt{\delta D}\times D$-lightplank, and the circles $v,w$ are $\gtrapprox \delta,\tau_D$-tangent, in the following sense.

\begin{defn}\label{defn:delta-tau-tangent}
We say two circles $v,w$ are $\gtrapprox\delta,\tau$-tangent if there are $\approx1$-comparable $\delta,\tau$-rectangles $\Omega^{(v)}\subset C_{\delta,v}$, $\Omega^{(w)}\subset C_{\delta,w}$.
\end{defn}

\begin{prop}\label{prop:nearly-ll}
If $(v,w)\in \mathcal L_{D,\Delta}(X)$ for $D\gg\delta$ and $\Delta\lessapprox\delta$, then $v,w$ are $\gtrapprox\delta,\tau_D$-tangent.
\end{prop}

\begin{proof}
Suppose $D\gg\delta$, $\Delta\lessapprox\delta$, and $(v,w)\in\mathcal L_{D,\Delta}(X)$. We will find a $\approx\delta\times\delta\tau^{-1}\times\delta\tau^{-2}$-lightplank $P$ such that both $v,w\in P$. By duality, for appropriate $\lambda\approx 1$,
\[
\mathbf D^*_{\lambda\delta}(P)\cap C_{\delta,v}\ \text{and}\ \mathbf D_{\lambda\delta}^*(P) \cap C_{\delta,w}
\]
 are $\approx 1$-comparable $\approx \delta,\tau_D$-rectangles contained in $C_{\delta,v}$ and $C_{\delta,w}$, respectively, so this will finish the proof.

Recall $\Gamma_v$ is the lightcone with vertex $v$. Let $w_0\in \Gamma_v$ be the nearest point to $w$. By definition, $v-w_0$ is a lightlike vector, and since $\delta\ll D$, we have $|v-w_0|\sim |v-w|\sim D$ and $|w_0-w|\sim\Delta(v,w)\lessapprox\delta$. Choose a unit vector $e_l$ parallel to $v-w_0$, another unit vector $e_m\in\R^2\times\{0\}$ orthogonal to $\pi_{\R^2}(v-w_0)$ (the projection of $v-w_0$ to $\R^2$), and a third unit vector $e_s$ so that $e_m,e_l,e_s$ is a lightlike basis for $\R^3$. It suffices to check
\begin{itemize}
\item[(i)] $|\langle v-w,e_l\rangle|\lessapprox \delta\tau_D^{-2}$,
\item[(ii)] $|\langle v-w,e_m\rangle|\lessapprox \delta\tau_D^{-1}$, and
\item [(iii)] $|\langle v-w,e_s\rangle|\lessapprox \delta$.
\end{itemize}
Since $\delta\tau_D^{-2} = D\sim |\langle v-w,e_l\rangle|$, and $\Delta(v,w)=|w-w_0|\sim|\langle v-w,e_s\rangle|\lessapprox \delta$, only point (ii) needs elaboration. But by elementary geometry considerations, this is a simple consequence of the assumption $d(v,w)\sim D$ and $\Delta(v,w)\lessapprox \delta$.
\end{proof}

\begin{prop}[Covering by lightplanks]\label{prop:both-tangent}
There is an absolute constant $C>1$ so that the following holds. If $(v,w)\in\mathcal L_{D,\Delta}(X)$ and $\mathcal R$ is a maximal pairwise $A$-incomparable collection of $\delta,\tau_D$-rectangles contained in $\bigcup_{x\in X}C_{\delta,x}$, then there exists $\Omega_0\in\mathcal R$ so that $v,w\in\mathbf D_{CA^C\delta}(\Omega_0)$.
\end{prop}

\begin{proof}
By Proposition \ref{prop:nearly-ll}, there are $\approx 1$-comparable $\delta,\tau_D$-rectangles $\Omega^{(v)},\Omega^{(w)}$ in $C_{\delta,v},C_{\delta,w}$ respectively. By maximality of $\mathcal R$ with respect to $A$-incomparability, there is some $\Omega_0\in\mathcal R$ such that $\Omega^{(v)}$ (say) is comparable to $\Omega_0$, hence $v\in\mathbf D_{CA^C}(\Omega_0)$ and since $\Omega^{(v)}$ and $\Omega^{(w)}$ are comparable, by almost-transitivity (Proposition \ref{almost-transitivity}), $\Omega_0$ and $\Omega^{(w)}$ are $CA^C$-comparable. Hence $w\in\mathbf D_{CA^C\delta}(\Omega_0)$ for a large enough absolute constant $C$, and the claim is proved.
\end{proof}

\begin{theorem}[Nearly lightlike separated pairs for a $1$-dimensional family of circles]\label{thm:num-ll-pairs}
    Suppose $X\subset Q=B(e_3,\alpha_0)$ is a set of $\delta$-separated circles obeying the 1-dimensional Frostman non-concentration condition
    \[
    |X\cap B_r|\lesssim \frac{r}{\delta},\quad\text{for all $r$-balls $B_r\subset\R^3$ and $r\ge \delta$}.
    \]
    If $\Delta\lessapprox \delta$ and $D \gg \delta$, then $|\mathcal L_{D,\Delta}(X)| \lessapprox T(\tau_D)^{1/2}(\delta^{-1}D)^{1/2}|X|$, where $\tau_D = \delta^{1/2}D^{-1/2}$.
\end{theorem}
\begin{proof}
Let $A\approx 1$ be a parameter (take $A = \delta^{-\epsilon}$ for definiteness), and fix an arbitrary maximal pairwise $A$-incomparable collection $\mathcal R$ of $\delta,\tau_D$-rectangles contained in $\bigcup_{x\in X}C_{\delta,x}$. 

By Proposition \ref{prop:both-tangent}, for a given $(v,w)\in \mathcal L_{D,\Delta}(X)$, we can find a rectangle $\Omega\in\mathcal R$ such that $v,w\in \mathbf D_{\lambda\delta}(\Omega)$ for some $\lambda=A^{O(1)}$, and we can write
\[
\mathcal L_{D,\Delta}(X)\subset \bigcup_{\Omega\in\mathcal R}\{(v,w)\in X\times X:v,w\in\mathbf D_{\lambda\delta}(\Omega)\}.
\]
By the union bound,
\begin{equation}\label{union}
|\mathcal L_{D,\Delta}(X)| \le \sum_{\Omega\in\mathcal R}|X\cap \mathbf D_{\lambda\delta}(\Omega)|^2 = \sum_{\Omega\in\mathcal R}\mu_{\lambda\delta}(\Omega)^2.
\end{equation}

Recall that by Proposition \ref{prop:nu-gamma}, $\mu_{\lambda\delta}(\Omega)\lessapprox T(\tau_D)$. We organize the last sum on the right-hand side of \eqref{union} by the dyadic value of $\mu_{\lambda\delta}(\Omega)$, up to $T(\tau_D)$. Letting $\mathcal R_{M,\lambda} = \{\Omega\in\mathcal R:\mu_{\lambda\delta}(\Omega)\sim M\}$, we estimate \eqref{union} by
\[
\sum_{\substack{1<M<T(\tau_D)\\M\ \text{dyadic}}}M^2|\mathcal R_{M,\lambda}| \le T(\tau_D)^{1/2}\sum_{1<M<T(\tau_D)}M^{3/2}|\mathcal R_{M,\lambda}|.
\]
By Proposition \ref{prop:main-geom}, for each $M$, $M^{3/2}|\mathcal R_{M,\lambda}|\lessapprox \tau_D^{-1}|X| = (\delta^{-1}D)^{\frac12}|X|$. As $T(\tau_D)\le |X\cap Q|\lesssim\delta^{-1}$ for any $1$-dimensional set of $\delta$-separated circles in $Q=B(e_3,\alpha_0)$, there are $\approx 1$-many values of $M$ in the sum, so we have shown $|\mathcal L_{D,\Delta}(X)|\lessapprox T(\tau_D)^{1/2}(\delta^{-1}D)^{1/2}|X|$. This finishes the proof.
\end{proof}

\section{Proof of main theorems}\label{sec:main-thm}

In addition to the estimate of circle tangencies for a 1-dimensional set of circles, we need a pointwise bound for the  Fourier transform of $d\sigma$, a smooth surface carried measure on the cone segment. We state the version of the estimate we will use in the proof of Theorem \ref{thm:ortiz-sparse-l1} here. The proof of this lemma is contained in Appendix \ref{app:ft}.
\begin{lemma}\label{lem:fourier-decay}
Let $d\sigma$ be a smooth surface measure supported in $\mathbb Cone^2$. For any $\epsilon>0$, there is a constant $C_\epsilon$ so that
\[
|\widecheck{d\sigma}(x)|\le C_\epsilon\frac1{(1+|x|)^{\frac12-\epsilon}}\frac1{(1+d(x,\Gamma_0))^{100\epsilon^{-1}}}
\]
holds for all $x\in \R^3$, where $\Gamma_0 = \{(a,r)\in\R^2\times\R:||a|-|r||=0\}$ is the lightcone with vertex $0$.
\end{lemma}

Now we are ready to give the proof of Theorem \ref{thm:ortiz-sparse-l1}, whose statement we recall here.
\begin{retheorem}
For each $\epsilon > 0$,  there is a constant $C_\epsilon$ so the following holds for each $R > 1$. Suppose $X\subset B_R$ is a disjoint union of unit balls that satisfies the 1-dimensional Frostman non-concentration condition
\[
|X\cap B(x,r)|\lesssim  r,\quad x\in \R^3,r>1.
\]
Let $\mathbf P(X)$ be the quantity
\[
\mathbf P(X) = \sup\{|X\cap P|:P\ \text{is a lightplank of dimensions $1\times R^{1/2}\times R$}\}.
\]
Then for any measurable function $w\colon\R^3\to[0,1]$ and any $f\in L^2(\mathbb Cone^2,d\sigma)$, the estimate
\[
\int_X|E_{\mathbb Cone^2}f|w\le C_\epsilon R^\epsilon\,\mathbf P(X)^{1/4}(\int_Xw)^{1/2}\|f\|_{L^2(d\sigma)}
\]
holds.
\end{retheorem}
\begin{proof}
    For each dyadic $\eta\le 1$, let $X_\eta$ be the union of unit balls $Q\subset X$ satisfying $\int_Qw\sim\eta$. The sets $X_\eta$ are pairwise disjoint, and we can write
\[
\int_X|Ef|w = \sum_\eta \int_{X_\eta} |Ef|w.
\]
Moreover,
\[
\int_Xw = \sum_\eta\int_{X_\eta}w=\sum_\eta\sum_{Q\subset X_\eta}\int_Qw\sim\sum_\eta\eta|X_\eta|.
\]
Set $d\mu_\eta = w1_{X_\eta}dx$. By duality, for an appropriate $|g|\le 1$,
\[
\int |Ef|\,d\mu_{\eta} = \int Ef\cdot g\,d\mu_\eta. 
\]
By Plancherel and Cauchy--Schwarz, this is bounded by
\[
(\int_{\mathbb Cone^2}|\widehat{g\mu_\eta}|^2\,d\sigma)^{1/2}\|f\|_{L^2(d\sigma)}.
\]
Next, we use Fourier transform properties, $|g|\le 1$, and the triangle inequality to bound this by
\[
(\iint_{X_\eta\times X_\eta}|\widecheck{d\sigma}(x-y)|w(x)w(y)\,dx\,dy)^{1/2}\|f\|_{L^2(d\sigma)}.
\]
By dyadic pigeonholing and Lemma \ref{lem:fourier-decay}, up to a rapidly decaying tail, for some $1<\rho<R$ this is bounded by
\[
(\frac{1}{\rho^{1/2}} \sum_{\substack{Q_1,Q_2\subset X_\eta\\d(Q_1,Q_2)\sim \rho\\\Delta(Q_1,Q_2)<R^\epsilon}}\iint_{Q_1\times Q_2}w(x)w(y)\,dxdy )^{1/2}\|f\|_{L^2(d\sigma)}.
\]
By the definition of $X_\eta$, $\iint_{Q_1\times Q_2}w(x)w(y
)\,dxdy\sim\eta^2$, so this is equal to
\[
\Big(\frac{\eta^2}{\rho^{1/2}}\#\big\{(Q_1,Q_2)\subset X_\eta\times X_\eta:d(Q_1,Q_2)\sim \rho,\Delta(Q_1,Q_2)<R^\epsilon\big\}\Big)^{1/2}\|f\|_{L^2(d\sigma)}.
\]
Since $X_\eta$ is a disjoint union of unit balls satisfying $|X_\eta\cap B_r|\le|X\cap B_r|\lesssim r$ for all $r$-balls and $r\ge 1$, we use Pramanik--Yang--Zahl's maximal estimate in the form of Theorem \ref{thm:num-ll-pairs} (after rescaling the ball of radius $R$ to a ball of radius $1$) to bound the previous displayed expression by
\[
\big(\frac{\eta^2}{\rho^{1/2}}\cdot \rho^{1/2}\mathbf P(X_\eta)^{1/2}|X_\eta|\big)^{1/2}\|f\|_{L^2(d\sigma)}.
\]
Simplifying and using $\mathbf P(X_\eta)\le\mathbf P(X)$, we arrive at
\[
\int_{X_\eta}|Ef|w\lessapprox \mathbf P(X)^{1/4}\cdot \eta |X_\eta|^{1/2}\|f\|_{L^2(d\sigma)}.
\]

Summing over $\eta>R^{-500}$ gives
\[
\int_X|Ef|w \lessapprox \mathbf P(X)^{1/4}(\sum_{\substack{\eta>R^{-500}\\\eta\ \text{dyadic}}} \eta|X_\eta|^{1/2})\|f\|_{L^2(d\sigma)} + O(R^{-100})\|f\|_{L^2(d\sigma)}.
\]
There are $\approx 1$ summands, so using Cauchy--Schwarz and neglecting the error term gives
\[
\int_X|Ef|w\lessapprox \mathbf P(X)^{1/4}(\sum_{\substack{\eta>R^{-500}\\\eta\ \text{dyadic
}}}\eta^2|X_\eta|)^{1/2}\|f\|_{L^2(d\sigma)}.
\]
Finally, $\eta\le 1$, and since $\sum_\eta\eta|X_\eta|\sim \int_Xw$, we have arrived at the desired bound
\[
\int_X|Ef|w\lessapprox \mathbf P(X)^{1/4}(\int_Xw)^{1/2}\|f\|_{L^2(d\sigma)}.
\]
\end{proof}
By writing $\int_{\mathbb Cone^2}|\widehat\mu|^2\,d\sigma = \iint_{X\times X}\widecheck{d\sigma}(x-y)\,dx\,dy$ and then repeating the part of the proof of Theorem \ref{thm:main-l2-sparse} that deals with this kind of expression for $w=1$, we obtain the following refinement of $\gamma_3(1)=1/2$ described in the introduction (recall Definition \ref{def:cone-rate}):
\begin{cor}[Refinement of $\gamma_3(1)=1/2$]
    For each $\epsilon > 0$, there is a constant $C_\epsilon$ so the following holds for each $R > 1$. Suppose $X\subset B_R$ is a 1-dimensional disjoint union of unit balls, and let $d\mu = 1_Xdx$.
    Let $\mathbf P(\mu)$ be the quantity
    \[
    \mathbf P(\mu) = \sup\{\mu(P):P\ \text{is a $1\times R^{1/2}\times R$-lightplank}\}.
    \]
    Then the estimate
    \[
    \int_{\mathbb Cone^2} |\widehat\mu|^2d\sigma \le C_\epsilon R^{\epsilon}\, \mathbf P(\mu)^{1/2}\mu(B_R)
    \]
    holds.
\end{cor}

\subsection{From Theorem \ref{thm:ortiz-sparse-l1} to Theorem \ref{thm:main-l2-sparse}} For each $1\le q < \infty$ and measurable $w\colon\mathbb R^n\to[0,\infty)$, let $S_q(\mathcal M,w)$ be the smallest constant so that
\[
(\int |E_{\mathcal M}f|^qw)^{1/q}\le S_q(\mathcal M,w)\|f\|_{L^2(\mathcal M)}
\]
holds for all $f\in L^2(\mathcal M)$. If $w = 1_X$, the indicator function of a set $X$, then we write $S_q(\mathcal M,X)$ instead of $S_q(\mathcal M,1_X)$. When $q = 1$, we have the following Proposition:
\begin{prop}[From Fourier averages to weighted Fourier extension estimates]\label{prop:fourier-mean-to-sparse-l1}
    We have
    \[
    S_1(\mathcal M,X) \le \sup_{\|h\|_{L^\infty(X)}=1}(\int_{\mathcal M}|\widehat{h}|^2\,d\sigma)^{1/2}.
    \]
\end{prop}
\begin{proof}
    By duality, $\int_X |Ef| = \sup_{\|h\|_{L^\infty(X)}=1}\int_X Ef\cdot h$. By Plancherel and Cauchy--Schwarz, the claim follows.
\end{proof}
At first glance, the estimate of $S_1(\mathcal M,X)$ in terms of Fourier averages in Proposition \ref{prop:fourier-mean-to-sparse-l1} may not seem so useful since we are interested in $S_2(\mathcal M,X)$, and by H\"older's inequality, we have $S_1(\mathcal M,X)\le |X|^{1/2}S_2(\mathcal M,X)$. However, the inequality $S_1(\mathcal M,X)\le |X|^{1/2}S_2(\mathcal M,X)$ can essentially be reversed:
\begin{theorem}\label{thm:transfer}
    Let $X$ be a disjoint union of $N$ unit balls, and let $1< q<\infty$. Let $q' = \frac{q}{q-1}$ be the H\"older conjugate exponent of $q$. Suppose that for some $A>1$ (which is allowed to depend on $X$) and for every measurable function $w\colon\R^n\to [0,1]$, we know
    \[
    \int_X |Ef|w \le A(\int_Xw)^{\frac{1}{q'}}\|f\|_{L^2(\mathcal M)}.
    \]
    Then for every $f$ with $\|f\|_{L^2(\mathcal M)}=1$,
    \[
    (\int_X|Ef|^q)^{\frac 1q}\lesssim A(\log N)^{\frac1q} .
    \]
    In other words, if $S_1(\mathcal M,w1_X)\le A(\int w1_X)^{1/q'}$ for every $w\colon\R^n\to[0,1]$, then $S_q(\mathcal M,X)\lesssim A(\log N)^{1/q}$.
\end{theorem}
\begin{remark}
    Theorem \ref{thm:transfer} is closely related to Lemma C.1 in Barcel\'o--Bennett--Carbery--Rogers' work on the dimension of divergence sets for dispersive equations \cite{barcelo2011dimension}.
\end{remark}
\begin{proof}
    Let $\|f\|_{L^2(\mathcal M)} = 1$, and let $c>0$ be very small to be determined. Note $|Ef|\le \|f\|_{L^1(\mathcal M)}\lesssim \|f\|_{L^2(\mathcal M)}=1$, so we can write
    \begin{align*}
    \int_X|Ef|^q &= \int_{X\cap \{|Ef|<c\}}|Ef|^q + \sum_{c<\lambda\lesssim 1}\int_{X\cap\{|Ef|\sim\lambda\}}|Ef|^q
    \end{align*}
    where the sum is over dyadic values of $\lambda$. For the first term we can bound it by
    \[
    \int_{X\cap \{|Ef|<c\}}|Ef|^q \le c^qN < 1,
    \]
    if $c<N^{-1/q}$. If this term dominates, we are done, so we assume the second term dominates. We write out the second term as
    \begin{align*}
    \sum_{c<\lambda\lesssim1}\int_{X\cap\{|Ef|\sim\lambda\}}|Ef|^q&\sim \sum_{c<\lambda<1}\lambda^{q-1}\int_X|Ef|1_{\{|Ef|\sim \lambda\}}.
    \end{align*}
    By assumption, since $0\le 1_{\{|Ef|\sim\lambda\}}\le 1$ and $\|f\|_{L^2}=1$, this is bounded by
    \[
    A\sum_{c<\lambda\lesssim 1}\lambda^{q-1}|X\cap\{|Ef|\sim \lambda\}|^{\frac{q-1}{q}}.
    \]
    By H\"older's inequality, this is bounded by
    \begin{align*}
    A(\log N)^{\frac1q}\big(\sum_{c<\lambda\lesssim 1}\lambda^q|X\cap\{|Ef|\sim \lambda\}|\big)^{\frac{q-1}{q}}
    &\lesssim A(\log N)^{\frac1q}(\int_X|Ef|^q)^{\frac{q-1}{q}}.
    \end{align*}
    Finally, we cancel $(\int_X|Ef|^q)^{\frac{q-1}{q}}$ from both sides to obtain
    \[
    (\int_X|Ef|^q)^{\frac{1}{q}}\lesssim A(\log N)^{\frac{1}{q}}.
    \]
\end{proof}
By Theorem \ref{thm:transfer}, Theorem \ref{thm:main-l2-sparse} holds as a corollary of Theorem \ref{thm:ortiz-sparse-l1}.
\begin{cor}
    Theorem \ref{thm:main-l2-sparse} holds.
\end{cor}

Lastly, we describe examples that establish the sharpness of Theorem \ref{thm:main-l2-sparse}.
\begin{theorem}\label{thm:sharp}
For each $R>1$, and each $T\in[1,R]$, there is a nonzero function $f\in L^2(\mathbb Cone^2)$ and disjoint union of unit balls $X\subset B(0,R)$ satisfying the 1-dimensional Frostman condition
\[
|X\cap B(x,r)|\lesssim r,\quad x\in \R^3,r>1,
\]
such that $\mathbf P(X)\sim T$, and
\[
\int_X |E_{\mathbb Cone^2}f|^2 \gtrapprox T^{1/2}\|f\|_{L^2(d\sigma)}^2.
\]
\end{theorem}
\begin{proof}
Let $f = 1_\theta$, where
\[
\theta = [1,\frac{3}{2}]\times[0,\frac{T^{-1/2}}{1000}]\subset \{\xi\in \R^2: 1<|\xi|<2\}.
\]
Let $e_m,e_l,e_s$  be our favorite lightlike basis (see Remark \ref{rem:favorite-posn}), and set
\[
P = \{x_me_m + x_le_l + x_se_s:|x_m|\le T^{1/2}, |x_l|\le T, |x_s|\le 1\}.
\]
By definition of $f$, $E_{\mathbb Cone^2}f(x)\gtrsim |\theta|\sim T^{-1/2}$ for $x\in P$. (Note $Ef(x) = |Ef(x)|$ for $x\in P$.)

Let $X$ be any $1\times 1 \times T$-tube contained in the lightplank $P$. By construction, $X$ is 1-dimensional and $\mathbf P(X)\gtrsim |X\cap P|\sim T$. Also, $\|f\|_{L^2(d\sigma)}^2=|\theta|\sim T^{-1/2}$, and
\[
\int_X |Ef|^2 \gtrsim T^{-1}|X\cap P| \sim T^{1/2}\|f\|_{L^2(d\sigma)}^2,
\]
as desired.
\end{proof}

\section{Refined decoupling and the Mizohata--Takeuchi conjecture}\label{sec:discuss}
The current best progress on Conjecture \ref{conj:mt-intro} for spheres is based on an application of refined decoupling estimates (see Theorem 4.4 of \cite{guth2020falconer}), and is due to Carbery, Iliopoulou, and Wang \cite{carbery2024some}. The special case of Carbery, Iliopoulou, and Wang's main estimate for spheres when $w$ is the indicator function of a disjoint union of unit balls is the following:

\begin{theorem}[Carbery--Iliopoulou--Wang]\label{thm:mt-recent}
    Let $S$ be the unit sphere in $\R^n$. For each $\epsilon > 0$,  there is a constant $C_\epsilon$ so the following holds for each $R > 1$. Suppose $X\subset B_R$ is an arbitrary disjoint union of unit balls, and
    let $\mathbf P(X)$ be the quantity
    \[
    \mathbf P(X) = \sup\{|X\cap P|:P\ \text{is a $R^{1/2}\times\dots\times R^{1/2}\times R$-tube (with any orientation)}\}.
    \]
    Then for all $f\in L^2(S)$, the estimate
    \begin{equation}\label{eqn:ciw}
    \int_X |E_Sf|^2\le C_\epsilon R^\epsilon\, \mathbf P(X)^{\frac{2}{n+1}}\|f\|_{L^2(S)}^2
    \end{equation}
    holds.
\end{theorem}
Theorem \ref{thm:mt-recent} is sharp by letting $X$ be a set $P$ as in the definition of $\mathbf P(X)$, and letting $f$ be an appropriate Knapp example.

The sets $P$ that are $R^{1/2}\times\dots\times R^{1/2}\times R$-tubes with directions orthogonal to $S$ in the definition of $\mathbf P(X)$ have the same shape as the supports of wave packets in the wave packet decomposition of $E_Sf$. As a consequence of Theorem \ref{thm:mt-recent}, by covering one of the sets $P$ with about $R^{\frac{n-1}{2}}$-many $1\times\dots\times 1\times R$-tubes, Conjecture \ref{conj:mt-intro} holds with an $R^{\frac{n-1}{n+1}}$ loss. Carbery--Iliopoulou--Wang show that it is impossible to improve on the $R^{\frac{n-1}{n+1}}$ loss using decoupling in a precise sense. They prove their partial progress on Conjecture \ref{conj:mt-intro} for spheres is sharp given the decoupling axioms of Guth \cite{HAPPY}, which are an axiomatization of the two key ingredients used to prove Bourgain--Demeter's celebrated decoupling theorem \cite{bourgain2015proof}, namely local constancy, and local $L^2$ orthogonality.

Theorem \ref{thm:main-l2-sparse} is a sharp weighted Fourier extension estimate for the cone in $\R^3$ and the $1$-dimensional weights. By covering a $1\times R^{1/2}\times R$-lightplank with about $R^{1/2}$-many $1\times 1\times R$-tubes whose directions are orthogonal to $\mathbb Cone^2$, we deduce Corollary \ref{cor:mt-progress}: the Mizohata--Takeuchi conjecture for the cone and the 1-dimensional weights holds with a power $R^{1/4}$ loss. I would like to compare this with what we can achieve for $\mathbb Cone^2$ and general sets using refined decoupling.

By applying refined decoupling estimates for $\mathbb Cone^2$ (see Theorem A.1 of \cite{harris2022length} for a precise statement of one version of refined decoupling theorem for the cone), we have the following sharp weighted Fourier extension theorem for $\mathbb Cone^2$ and arbitrary disjoint unions of unit balls. I thank Ciprian Demeter for allowing me to include the following theorem and a sketch of its proof in this article.
\begin{theorem}[Ciprian Demeter, 2023---private communication]\label{thm:ciprian}
    For each $\epsilon>0$ there is a constant $C_\epsilon$ so the following holds for every $R>1$. Let $X\subset B_R$ be an arbitrary disjoint union of unit balls.
    
    Let $\mathbf P(X)$ be the quantity
    \[
    \mathbf P(X) = \sup\{|X\cap P|:P\ \text{is a $1\times R^{1/2}\times R$-lightplank}\}.
    \]
    Then for every $f\in L^2(\mathbb Cone^2)$,
    \begin{equation}\label{eqn:cone-ciprian}
    \int_X |E_{\mathbb Cone^2}f|^2 \le C_\epsilon R^\epsilon\,\mathbf P(X)^{2/3}\|f\|_{L^2(\mathbb Cone^2)}^2.
    \end{equation}
\end{theorem}
\begin{proof}[Proof sketch]
Let $Ef := E_{\mathbb Cone^2}f$ be the Fourier extension operator for the truncated cone, and apply the wave packet decomposition of $Ef$ at scale $R$, writing
\[
Ef = \sum_{P\in \mathcal{P}} a_P \psi_P,
\]
where each $\psi_P$ is essentially supported on a $1 \times R^{1/2} \times R$-lightplank $P$, oriented in a direction orthogonal to an angular cap on the cone of width $R^{-1/2}$. (See Chapter 2 of \cite{demeter2020fourier} for details.)

\medskip
\noindent
\textit{Simplifying assumptions.} For clarity, we ignore Schwartz tails and assume $|\psi_P| = 1_P$ and $a_P = 1$. This harmless simplification allows us to work directly with the planks in the collection $\mathcal P$. The interested reader may consult \cite{carbery2024some} for a fully rigorous version of the analogous argument for the sphere.

\medskip
\noindent
By conservation of mass and orthogonality of wave packets:
\[
R\|f\|_{L^2}^2\sim\|Ef\|_{L^2(B_R)}^2 \sim \sum_{P \in \mathcal{P}} \|\psi_P\|_{L^2(B_R)}^2 \sim \sum_{P\in\mathcal P} |P| \sim R^{3/2} |\mathcal{P}|.
\]
Hence,
\begin{equation} \label{eq:L2count}
\|f\|_{L^2}^2 \sim R^{1/2} |\mathcal{P}|.
\end{equation}

\medskip
\noindent
By Hölder’s inequality,
\begin{equation} \label{eq:Holder}
\int_X |Ef|^2 \le |X|^{2/3} \left( \int_X |Ef|^6 \right)^{1/3}.
\end{equation}

\medskip
\noindent
By dyadic pigeonholing, we may assume each unit ball in $X$ is intersected by approximately $r$ planks from $\mathcal{P}$. Using refined decoupling for the truncated cone,
\[
\|Ef\|_{L^6(X)} \lessapprox r^{1/3} \left( \sum_{P \in \mathcal{P}} \|\psi_P\|_{L^6(\mathbb{R}^3)}^6 \right)^{1/6} \sim r^{1/3} |\mathcal{P}|^{1/6} R^{1/4}.
\]
Substitute into \eqref{eq:Holder}:
\begin{equation} \label{eq:precount}
\int_X |Ef|^2 \lessapprox |X|^{2/3} r^{2/3} |\mathcal{P}|^{1/3} R^{1/2}.
\end{equation}

\medskip
\noindent
Now relate $r$ and $\mathbf{P}(X)$. Each unit ball is intersected by $\sim r$ planks, and each plank intersects at most $\mathbf{P}(X)$ unit balls. Thus, the total number of incidences satisfies:
\[
r |X| \lesssim (\log R) \cdot \mathbf{P}(X) |\mathcal{P}|.
\]
Solve for $r$ and plug into \eqref{eq:precount}:
\[
\int_X |Ef|^2 \lessapprox |X|^{2/3} \left( \frac{\mathbf{P}(X) |\mathcal{P}|}{|X|} \right)^{2/3} |\mathcal{P}|^{1/3} R^{1/2}.
\]
Simplifying:
\[
\int_X |Ef|^2 \lessapprox \mathbf{P}(X)^{2/3} |\mathcal{P}| R^{1/2}.
\]

\medskip
\noindent
Finally, apply \eqref{eq:L2count}, giving:
\[
\int_X |Ef|^2 \lessapprox \mathbf{P}(X)^{2/3} \|f\|_{L^2}^2.
\]
The claim follows.
\end{proof}

A sharp example for the estimate \eqref{eqn:cone-ciprian} is obtained by taking $X$ to be a single $1 \times R^{1/2} \times R$-lightplank and choosing $f$ to be a standard Knapp-type example as in Theorem~\ref{thm:sharp}. Since a light plank can be covered by roughly $R^{1/2}$-many $1 \times 1 \times R$-tubes, Theorem~\ref{thm:ciprian} implies a version of Conjecture~\ref{conj:mt-intro} for the cone and for unions of unit balls, with an $R^{1/3}$ loss. This is slightly worse than the $R^{1/4}$ loss we prove in Corollary~\ref{cor:mt-progress} for 1-dimensional unions of unit balls.

In the work of Carbery–Iliopoulou–Wang \cite{carbery2024some}, the authors demonstrate that the $R^{1/3}$ loss in their Mizohata–Takeuchi-type estimate for the unit circle $S$ in the plane cannot be improved using Guth’s decoupling axioms alone. They construct an ``enemy scenario'' that satisfies the decoupling axioms but does not arise from any function of the form $E_S f$ \cite{HAPPY}. It would be very interesting to determine whether a similar enemy scenario exists for the cone. Such a construction would imply that the $R^{1/3}$ loss in the Mizohata–Takeuchi conjecture for the cone in $\mathbb{R}^3$, for general sets $X$, cannot be improved without exploiting structural properties of the extension operator $E_{\mathbb{C}one^2}$ beyond those encoded in the decoupling axioms.

\appendix
\section{Lemmas of circle geometry}\label{appendix:rect-lightplank-lemmas}
In this Appendix we record a few facts about the geometry of overlapping circles and their corresponding dual lightplanks for use in Section \ref{sec:pc-duality}. The first Lemma describes the region of intersection of two $\delta$-annuli: generally two $\delta$-annuli intersect in two $\delta,\tau$-rectangles for some $\tau$ depending on the distance and degree of internal tangency.

We recall our notation from Remark \ref{rem:notation} that $Q = B(e_3,\alpha_0)$ for a small absolute constant ($\alpha_0 = 1/100$ will do).
\begin{lemma}[Wolff \cite{wolff1999recent}, Lemma 3.1 (a)]\label{lemma:annuli-wolff}
Assume that $v,w$ are two circles in $Q$. Let $d = d(v,w)$, and $\Delta = \Delta(v,w)$. Then
\[
|C_{\delta,v}\cap C_{\delta,w}|\lesssim \delta\cdot \frac{\delta}{\sqrt{(d+\delta)(\Delta+\delta)}}.
\]
\end{lemma}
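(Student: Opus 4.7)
The plan is to reduce the area to a one-dimensional integral by a change of variables adapted to the two circles $v,w$, and then estimate it by elementary means. After a rigid motion I may assume $a_w=0$ and $a_v=(D,0)$ with $D=|a_v-a_w|$; set $r_1=r_w$, $r_2=r_v$, $\alpha=r_1-r_2$, so that $d=D+|\alpha|$ and $\Delta=|D-|\alpha||$.

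The key step is to use the map $\Phi\colon\R^2\setminus\{x\text{-axis}\}\to\R^2$ defined by $\Phi(x)=(|x|,|x-a_v|)$, which is a two-to-one local diffeomorphism onto the region $U=\{(s_1,s_2):|s_1-s_2|\le D\le s_1+s_2\}$. A short calculation using the two-dimensional cross product shows $|\det J\Phi(x)|=\sin\theta(x)$, where $\theta(x)$ is the angle at vertex $x$ of the triangle with vertices $0,a_v,x$. Since $C_{\delta,v}\cap C_{\delta,w}=\Phi^{-1}(R_\delta)$ for the square $R_\delta:=(r_1-\delta,r_1+\delta)\times(r_2-\delta,r_2+\delta)$, the area formula gives
\[
|C_{\delta,v}\cap C_{\delta,w}| = 2\int_{R_\delta\cap U}\frac{ds_1\,ds_2}{\sin\theta(s_1,s_2)}.
\]

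Heron's formula for the triangle with side lengths $s_1,s_2,D$ gives
\[
4 s_1^2 s_2^2\sin^2\theta = (s_1+s_2+D)(s_1+s_2-D)\bigl(D+(s_1-s_2)\bigr)\bigl(D-(s_1-s_2)\bigr).
\]
Since all circles lie in $Q$, we have $r_1,r_2\sim 1$ and $D\le 1$, so on $R_\delta$ the first two factors are $\sim 1$, yielding $\sin^2\theta\sim D^2-(s_1-s_2)^2$. Changing to variables $u=s_1+s_2$, $v=s_1-s_2$ (Jacobian $1/2$), the $u$-integration contributes a factor $\lesssim\delta$, and what remains is
\[
\int_{[\alpha-2\delta,\,\alpha+2\delta]\cap[-D,D]}\frac{dv}{\sqrt{(D-|v|)(D+|v|)}}.
\]

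To finish, I would analyze the two factors in the denominator on the relevant range of $v$: since $|v-\alpha|\le 2\delta$, one has $D+|v|\gtrsim d+\delta$ up to constants, while $D-|v|$ sweeps out an interval contained in $[0,\Delta+2\delta]$. The elementary computation $\int_0^{c\delta}\!dt/\sqrt{t+\Delta}\lesssim\delta/\sqrt{\Delta+\delta}$ then produces the claimed bound $\delta^2/\sqrt{(d+\delta)(\Delta+\delta)}$. The main subtlety is the near-tangent regime $\Delta\lesssim\delta$, where $\Phi$ has critical points inside $R_\delta$ and the integrand becomes genuinely singular; this is precisely what the $+\delta$ regularizations absorb, and the singularity is handled by the explicit one-variable integration above. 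In the fully degenerate case $d,\Delta\lesssim\delta$ one can also fall back on the trivial bound $|C_{\delta,v}\cap C_{\delta,w}|\lesssim\delta$, which is consistent with $\delta^2/\sqrt{\delta\cdot\delta}=\delta$.
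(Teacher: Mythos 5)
The paper does not prove this lemma at all --- it is imported verbatim as Lemma 3.1(a) of Wolff's survey \cite{wolff2} --- so there is no internal argument to compare against; what you have written is a self-contained derivation, and it is essentially correct. Your route (the two-to-one change of variables $x\mapsto(|x-a_w|,|x-a_v|)$ with Jacobian $\sin\theta(x)$, Heron's formula to extract $\sin^2\theta\sim D^2-(s_1-s_2)^2$ using $s_1,s_2\sim 1$ and $D\lesssim\alpha_0$, and the reduction to a one-variable integral in $v=s_1-s_2$) is the classical argument for the two-annuli intersection bound. One step is stated a bit too loosely: knowing only that $t=D-|v|$ ``sweeps out an interval contained in $[0,\Delta+2\delta]$'' of length $O(\delta)$ does not suffice, since $\int_I dt/\sqrt{t}$ over such an interval can be as large as $\sqrt{\delta}$, which exceeds $\delta/\sqrt{\Delta+\delta}$ when $\Delta\gg\delta$. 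What saves the argument is that the interval is anchored at height $\approx\Delta$: on the domain one has $\bigl||v|-|\alpha|\bigr|\le 2\delta$, hence $t\in[\max(0,\Delta-2\delta),\,\Delta+2\delta]$ when $D\ge|\alpha|$ (and when $D<|\alpha|$ the domain is empty unless $\Delta\le 2\delta$); only then does $\int_I dt/\sqrt{t}\lesssim\delta/\sqrt{\Delta+\delta}$ follow, which is exactly the computation $\int_0^{c\delta}dt/\sqrt{t+\Delta}\lesssim\delta/\sqrt{\Delta+\delta}$ you quote --- so you clearly have the right picture, but the localization of the interval near $\Delta$ should be made explicit. Similarly, the lower bound $D+|v|\gtrsim d+\delta$ needs the one-line case split $\max(D,|\alpha|)\gtrsim\delta$ versus the fully degenerate case, which you correctly dispose of with the trivial bound $|C_{\delta,v}\cap C_{\delta,w}|\lesssim\delta$. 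A cosmetic point: the letter $v$ is doing double duty as a circle and as the variable $s_1-s_2$.
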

\begin{cor}\label{cor:tau-estimate}
If $\Omega\subset C_{\delta,v}\cap C_{\delta,w}$ is a $\delta,\tau$-rectangle, then
\[
\tau \lesssim \frac{\delta}{\sqrt{(d(v,w)+\delta)(\Delta(v,w)+\delta)}}
\]
\end{cor}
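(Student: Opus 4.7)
The plan is to reduce this almost immediately to Lemma \ref{lemma:annuli-wolff}. The only quantitative fact about a $\delta,\tau$-rectangle I need is the trivial volume bound $|\Omega| \sim \delta\tau$, which follows from the definition of $\Omega$ as the $\delta$-neighborhood of an arc of length $\tau$ on a unit-scale circle (the curvature only affects the implicit constants, since we are assuming $\tau\ll 1$).

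Given this, the argument is one line. The hypothesis $\Omega\subset C_{\delta,v}\cap C_{\delta,w}$ together with monotonicity of Lebesgue measure and Lemma \ref{lemma:annuli-wolff} gives
\[
\delta\tau \;\sim\; |\Omega| \;\le\; |C_{\delta,v}\cap C_{\delta,w}| \;\lesssim\; \frac{\delta^2}{\sqrt{(d(v,w)+\delta)(\Delta(v,w)+\delta)}},
\]
and dividing through by $\delta$ yields the claimed bound on $\tau$.

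There is no real obstacle here; the corollary is essentially just a restatement of Lemma \ref{lemma:annuli-wolff} once one notices that a $\delta,\tau$-rectangle efficiently fills a $\delta\tau$-sized portion of the lens $C_{\delta,v}\cap C_{\delta,w}$. The only thing worth double-checking is the lower bound $|\Omega|\gtrsim \delta\tau$ in the regime $\tau\gg\delta^{1/2}$, where $\Omega$ is curved rather than rectangular; but since $\Omega$ contains the $\delta$-neighborhood of its core arc of length $\tau$ and this neighborhood is essentially a tube of thickness $\delta$, the bound holds with an absolute constant.
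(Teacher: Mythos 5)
Your argument is exactly the paper's proof: compare $|\Omega|\sim\delta\tau$ with the measure of the lens $C_{\delta,v}\cap C_{\delta,w}$ via Lemma \ref{lemma:annuli-wolff} and cancel a factor of $\delta$. It is correct, and your extra remark about the lower bound $|\Omega|\gtrsim\delta\tau$ in the curved regime is a reasonable sanity check that the paper leaves implicit.
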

\begin{proof}
Let $d = d(v,w)$, $\Delta = \Delta(v,w)$. By Lemma \ref{lemma:annuli-wolff},
\[
\delta \tau \sim |\Omega|\le |C_{\delta,v}\cap C_{\delta,w}| \lesssim \delta\cdot\frac{\delta}{\sqrt{(d+\delta)(\Delta+\delta)}}.
\]
Canceling $\delta$ from both sides of the inequality gives the desired result.
\end{proof}

The next Proposition calculates the angle of intersection of two circles in terms of their distance and degree of internal tangency. The angle of intersection of two circles is a useful quantity when comparing lightlike coordinates. See Proposition \ref{prop:change-of-basis} for our use of it in the proof of continuity of the tangency map.

\begin{prop}\label{prop:intersect-angle}
Suppose $C_1,C_2$ are two circles in $Q$ (i.e., two points in $Q$) which intersect at a point $a\in\R^2$. Let $u_1,u_2$ be the positively oriented unit tangent vectors to $C_1,C_2$, respectively at the point $a$. Then $\angle u_1,u_2\sim \sqrt{d(C_1,C_2)\Delta(C_1,C_2)}$
\end{prop}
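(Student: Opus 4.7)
The plan is to reduce the claim to the law of cosines applied to the triangle formed by the two centers and the intersection point $a$. Let $a_i$ denote the center of $C_i$, so $|a - a_i| = r_i$. The tangent vector $u_i$ at $a$ is orthogonal to the radial vector $a - a_i$, so the angle $\theta := \angle(u_1, u_2)$ equals the angle between $a-a_1$ and $a-a_2$.

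Applying the law of cosines to the triangle $a_1 a a_2$ yields
\[
|a_1-a_2|^2 = r_1^2 + r_2^2 - 2 r_1 r_2 \cos \theta,
\]
which I would rearrange as
\[
2 r_1 r_2 (1 - \cos\theta) = |a_1-a_2|^2 - (r_1-r_2)^2.
\]
The right-hand side factors as a difference of squares,
\[
|a_1-a_2|^2 - (r_1-r_2)^2 = \bigl(|a_1-a_2| - |r_1-r_2|\bigr)\bigl(|a_1-a_2| + |r_1-r_2|\bigr),
\]
using that $|r_1-r_2|^2 = (r_1-r_2)^2$ and the fact that $|a_1-a_2| \ge |r_1-r_2|$ (which is forced by the assumption that the circles intersect). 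By the definitions in Lemma \ref{lemma:annuli-wolff}, this is exactly $\Delta(C_1,C_2)\cdot d(C_1,C_2)$.

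Finally, since $r_1, r_2 \in [1-\alpha_0, 1+\alpha_0]$ we have $r_1 r_2 \sim 1$, and since $d(C_1,C_2) \le 2\alpha_0 \ll 1$ forces $\theta$ to be bounded away from $\pi$, the standard approximation $1 - \cos\theta \sim \theta^2$ holds. Combining these gives $\theta^2 \sim \Delta(C_1,C_2)\cdot d(C_1,C_2)$, which is the desired estimate after taking square roots.

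There is no real obstacle here; the only content is the algebraic identity obtained from the law of cosines together with the difference-of-squares factorization that conveniently produces the product $\Delta \cdot d$ on the right-hand side. The geometric hypothesis that $C_1$ and $C_2$ actually intersect is what lets us drop the absolute value inside that factorization.
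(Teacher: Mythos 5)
Your proof is correct and follows essentially the same route as the paper's: reduce to the angle at $a$ in the triangle of centers, apply the law of cosines, and identify $|a_1-a_2|^2-(r_1-r_2)^2$ with $d\cdot\Delta$ before using $1-\cos\theta\sim\theta^2$. Your observation that intersection forces $|a_1-a_2|\ge|r_1-r_2|$ is a slightly cleaner way to handle the sign than the paper's case split, but the argument is the same.
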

\begin{proof}
Without loss of generality, suppose $C_1 = (0,0,r)$ and $C_2 = (b,0,s)$ with $1-\alpha_0\le s\le r \le 1+\alpha_0$ and $b > 0$. With these choices, we have $d(C_1,C_2) = b + (r-s)$ and $\Delta(C_1,C_2) = |b-(r-s)|$. Consider the triangle $T$ in the plane whose vertices are $a$, $(0,0)$ and $(b,0)$. By elementary geometry, the angle $\phi$ at the vertex $a$ of $T$ is the same as $\angle u_1,u_2$. By the law of cosines,
\[
b^2 = r^2+s^2-2rs\cos\phi.
\]
Adding and subtracting $2rs$ and completing the square yields
\[
b^2 = (r-s)^2 + 2rs(1-\cos\phi).
\]
Note that by definition, $d(C_1,C_2)\Delta(C_1,C_2) = |b^2-(r-s)^2|$. Suppose $b > r-s$ (with a similar conclusion in the case $b \le r-s$), so rearranging, we have
\[
\frac{d(C_1,C_2)\Delta(C_1,C_2)}{rs} = 2(1-\cos\phi).
\]
Using the approximations $r,s\sim 1$ and $\cos\phi\sim 1-\frac{\phi^2}{2}$, we obtain
\[
d(C_1,C_2)\Delta(C_1,C_2)\sim \phi^2.
\]
Taking square roots yields the claim.
\end{proof}

\begin{prop}\label{prop:angle-em}
Suppose $\Omega^{(v)}\subset\overline\Omega^{(w)}$, where $\Omega^{(v)}$ is a $\delta,\tau$-rectangle, and $\overline\Omega^{(w)}$ is an $A\delta,A\tau$-rectangle. Let $a,\bar a$ be the center points of $\Omega^{(v)}$ and $\overline\Omega^{(w)}$, respectively, and let $e_m,\bar e_m$ be the positively oriented unit tangent vectors to $v,w$ at the points $a,\bar a$, respectively. Then $\angle e_m, \bar e_m \lesssim A^2\tau$.
\end{prop}
\begin{proof}
Let $d = d(v,w)$, $\Delta = \Delta(v,w)$, and let $\gamma,\overline\gamma$ denote the core arcs of $\Omega,\overline\Omega$. We make the same simplifying technical assumption that there exists a point $x\in \gamma\cap \overline\gamma$ to make use of Proposition \ref{prop:intersect-angle}. To remove this assumption, we note by replacing $v$ with a concentric circle of a slightly smaller or larger radius, we can arrange for $\gamma\cap\overline\gamma \ne\emptyset$, while keeping $\angle e_m,\bar e_m$ the same and $\Omega^{(v)}\subset\overline\Omega^{(w)}$.
By Proposition \ref{prop:intersect-angle}, the angle between the tangent lines to $v$ and $w$ at $x$, respectively, is $\sim \sqrt{d\Delta}$.

By the assumption $\Omega^{(v)}\subset\overline\Omega^{(w)}$, we have
\begin{equation}\label{eqn:big-intersection-contains-rect}
    |C_{A\delta,v}\cap C_{A\delta,w}|\ge |\Omega| \sim \delta\tau.
\end{equation}
On the other hand, by Lemma \ref{lemma:annuli-wolff},
\begin{equation}\label{eqn:meas-int-rects}
|C_{A\delta,v}\cap C_{A\delta,w}| \lesssim \frac{(A\delta)^2}{\sqrt{(d+A\delta)(\Delta+A\delta)}}.
\end{equation}
Combining inequalities \eqref{eqn:big-intersection-contains-rect} and \eqref{eqn:meas-int-rects} and using $\delta^{1/2}\le \tau$ gives
\[
\sqrt{d\Delta}\lesssim (\delta\tau)^{-1}(A\delta)^2 \le A^2\tau.
\]
Finally, because $\mathrm{dist}(a,x) + \mathrm{dist}(x,\bar a) \lesssim A\tau$, by comparing angles at $a$ and $\bar a$, we conclude $\angle e_m,\bar e_m = O(A^2)\tau + O(A)\tau = O(A^2)\tau$. 
\end{proof}

The following Proposition about engulfing rectangles is reminiscent of, but not the same as, Lemma 1.2 of Wolff's paper \cite{wolff2000local}, so we include the proof here for completion.

\begin{prop}[Engulfing]\label{prop:recip}
Let $1 < A,B \lessapprox 1$ and $\Omega^{(v)}$ be a $\delta,\tau$-rectangle contained in $C_{\delta,v}\cap C_{A\delta,w}$ for $v,w\in Q$. Suppose $\overline\Omega^{(w)}$ is an $A\delta,B\tau$-rectangle contained in $C_{A\delta,w}$ which contains $\Omega^{(v)}$. Then there exists $A_1\approx 1$ such that $\overline\Omega^{(w)}\subset C_{A_1AB^2\delta,v}\cap C_{A\delta,w}$.
\end{prop}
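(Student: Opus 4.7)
My plan is to reduce the statement to a bound on the explicit function $H(t) := |w(t)|^2 - r_v^2$, where $w(t)$ is the arc-length parametrization of $w$. Writing the center of $w$ as $a_w$ and its radius as $r_w$, a direct computation gives the closed form
\[
H(t) = C_0 + C_1\cos\phi(t), \qquad C_0 = |a_w|^2 + r_w^2 - r_v^2, \ C_1 = 2r_w|a_w|, \ \phi(t) = t/r_w + \mathrm{const},
\]
a shifted cosine in $t$. After normalizing by a rigid motion so that $a_v = 0$, $r_v = 1$, and the core arc $\alpha_v$ of $\Omega$ is centered at $(1,0)$, the relation $|D_v(w(t))| = ||w(t)| - 1| \sim |H(t)|/2$ (valid when $|H(t)| \ll 1$, which will hold) reduces the desired conclusion $\overline{\Omega} \subset C_{A_1 A B^2 \delta, v}$ to showing $|H(t)| \lesssim AB^2\delta$ on the $t$-interval $[-B\tau/2, B\tau/2]$ parametrizing the core arc $\alpha_w$ of $\overline{\Omega}$; the $A\delta$ thickness of $\overline{\Omega}$ is absorbed into the universal constant $A_1$.

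From the hypothesis $\Omega \subset \overline{\Omega}$, every point of $\alpha_v$ lies in the $A\delta$-neighborhood of $\alpha_w$, so for each $s \in [-\tau/2, \tau/2]$ there is a nearest $w(t^*(s))$ with $|v(s) - w(t^*(s))| \le A\delta$; since $v(s)$ lies on the unit circle this forces $|H(t^*(s))| \lesssim A\delta$. In our regime $A\delta \le \delta^{1-C\epsilon} \ll \delta^{1/2} \le \tau$, so the map $t^*$ is continuous and essentially an isometry, and thus its image $I$ is an interval of length $\sim \tau$ centered on some $t_0 \in [-B\tau/2, B\tau/2]$, with $|H(t)| \lesssim A\delta$ for $t \in I$.

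The main computation is to re-expand $H$ around $t_0$: by the cosine addition formula,
\[
H(t) = K_0 + K_2(\cos u - 1) - K_1\sin u, \qquad u := (t - t_0)/r_w,
\]
with $K_0 = H(t_0)$, $K_2 = C_1\cos\phi(t_0)$, $K_1 = C_1\sin\phi(t_0)$. Evaluating at the three points $t_0$ and $t_0 \pm \tau'/2$ for any $\tau' \sim \tau$ with $[t_0-\tau'/2, t_0+\tau'/2] \subset I$, and using $|H| \lesssim A\delta$ at each, sums and differences of the three values yield
\[
|K_0| \lesssim A\delta, \qquad |K_1| \lesssim A\delta/\tau, \qquad |K_2| \lesssim A\delta/\tau^2.
\]
Then for $t \in [-B\tau/2, B\tau/2]$ we have $|u| \lesssim B\tau$, and the triangle inequality together with $|1-\cos u| \le \min(u^2/2, 2)$ and $|\sin u| \le \min(|u|, 1)$ gives $|H(t)| \lesssim AB^2\delta$ in both regimes: for $B\tau \lesssim 1$ the small-$u$ Taylor bounds give $A\delta + AB^2\delta/2 + AB\delta$, and for $B\tau \gtrsim 1$ the global bounds combined with $1/\tau^2 \lesssim B^2$ give $A\delta + 2A\delta/\tau^2 + A\delta/\tau$.

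The main subtlety I expect is the case $B\tau \gtrsim 1$: a black-box Chebyshev-type extension of a degree-$2$ Taylor polynomial approximation to $H$ would pick up a suboptimal $B^3$ factor from the cubic remainder in the Taylor series. The fix is to retain $H$ in its closed sinusoidal form, whose global $L^\infty$-norm is controlled by $|K_0|+|K_1|+|K_2|$ with no Taylor remainder to worry about.
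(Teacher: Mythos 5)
Your proof is correct, and its skeleton matches the paper's: both arguments normalize one of the two circles to the unit circle at the origin, write the relevant distance function along the other core arc as an explicit shifted cosine, and win by showing that the constant, $\sin$, and $(1-\cos)$ coefficients have sizes $O(A\delta)$, $O(A\delta\tau^{-1})$, $O(A\delta\tau^{-2})$ respectively, so that passing from an arc of length $\sim\tau$ to one of length $B\tau$ costs only a factor $B^2$. The genuine difference is in how those coefficient bounds are obtained. The paper deduces $|\mathrm{Re}(a)|\lesssim \delta\tau^{-2}$ and $|\mathrm{Im}(a)|\lesssim A\delta\tau^{-1}$ from rectangle--lightplank duality (the variant of Proposition \ref{prop:dual-rectangle} describing $\mathcal C_{A\delta}(\Omega)$ as a lightplank, projected to the plane), i.e., it imports a previously established geometric fact about the set of tangent circles; you instead extract the same information directly from the hypothesis $\Omega\subset\overline\Omega$ by evaluating $H$ at the three points $t_0$, $t_0\pm\tau'/2$ of the interval on which $|H|\lesssim A\delta$ and taking sums and differences. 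This makes your argument self-contained, and it also sidesteps the paper's ``simplifying technical assumption'' that the two core arcs intersect (which the paper removes by perturbing $v$), since you never need $H(t_0)=0$, only $|H(t_0)|\lesssim A\delta$. The one step you should make airtight is the claim that the nearest-point map $t^*$ has image an interval of length $\gtrsim\tau$ on which $|H|\lesssim A\delta$: the clean justification is the triangle inequality $|w(t^*(s))-w(t^*(s'))|\ge |v(s)-v(s')|-2A\delta$ together with $A\delta\ll\tau$ and the $O(1)$-Lipschitz bound on $H$, which is routine. Finally, your closing remark about keeping $H$ in closed sinusoidal form to avoid a $B^3$ loss when $B\tau\gtrsim1$ is well taken, though the paper's bounds $1-\cos\theta\le\theta^2/2$ and $|\sin\theta|\le|\theta|$ are also global, so it incurs no such loss either.
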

\begin{remark}
    In terms of taking tangency in the sense of Definition \ref{defn:tangency}, Proposition \ref{prop:recip} can be summarized succinctly as
    \[
    \Omega^{(v)}\subset\overline\Omega^{(w)}\implies v\in \mathbf D_{CAB^2\delta}(\overline\Omega^{(w)}),
    \]
    provided $C$ is sufficiently large. The threshold $CAB^2$ is probably not optimal, but the fact it is a polynomial in $A$ and $B$ is important.
\end{remark}
\begin{proof}
Let $d = d(v,w)$, $\Delta = \Delta(v,w)$, and let $\gamma,\overline\gamma$ denote the core arcs of $\Omega^{(v)},\overline\Omega^{(w)}$. We make the simplifying technical assumption that there exists a point $x\in \gamma\cap \overline\gamma\subset \Omega^{(v)}$. To remove this assumption, we note by replacing $v$ with a concentric circle of a slightly smaller or larger radius, we can arrange for $\gamma\cap\overline\gamma \ne\emptyset$, while keeping $\Omega^{(v)}\subset\overline\Omega^{(w)}$.

By translating, scaling by a factor $\sim 1$, and rotating our coordinate system if necessary, assume that $w = (0,0,1)$, $v = (a_1,a_2,s) =: (a,s)$, and that $x=1$ is on the positive real axis (see Figure \ref{fig:engulfing}).  In particular, we may assume that $\Omega^{(v)}$ is in our favorite position (see Remark \ref{rem:favorite-posn}), and its center is $e_1$.
\begin{figure}
    \centering
    \includegraphics[width=10cm]{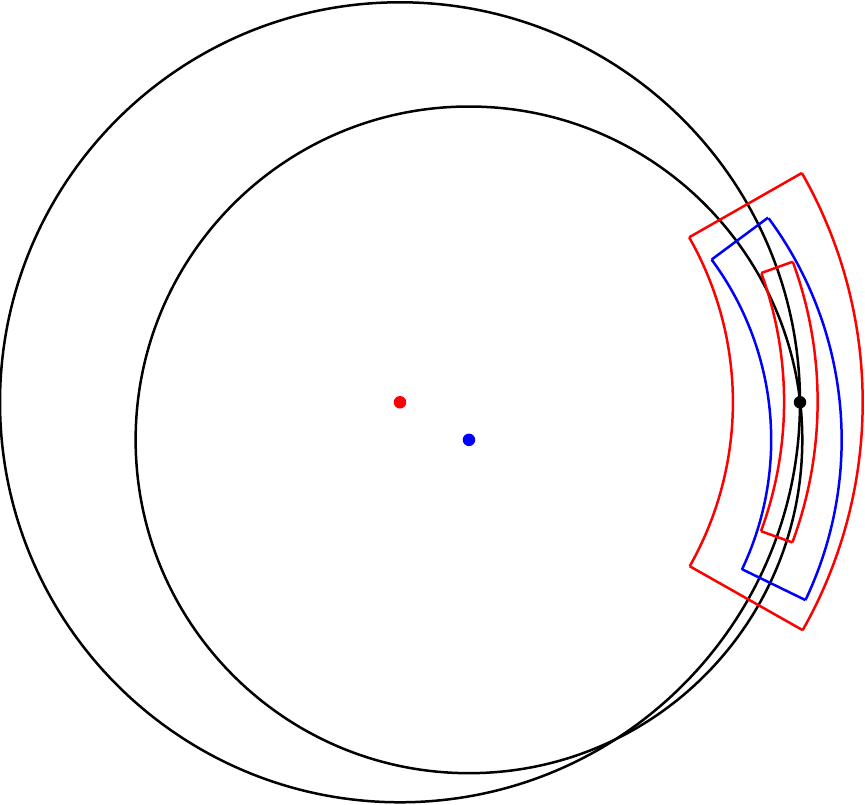}
    \caption{Engulfing rectangles}
    \label{fig:engulfing}
\end{figure}

With this choice of coordinate system, it suffices to show (using complex notation) that for $|r-1|<A\delta$ and $|\theta|\lesssim B\tau$, we have
\[
|re^{i\theta}-a| = s + O(AB^2)\delta,
\]
since in our chosen coordinate system, $\overline\Omega^{(w)}\subset\{re^{i\theta}:|r-1|<A\delta,|\theta|\lesssim B\tau\}$.

So assume $|r-1|<A\delta$ and $|\theta|\lesssim B\tau$. It follows by the triangle inequality we can replace $re^{i\theta}$ with $e^{i\theta}$ at the cost of $A\delta$. Next, because we assume $1 \in\gamma\cap\overline\gamma$, we have $s = |1-a|$, so we can substitute $|1-a|$ for $s$, and we are left with estimating
\[
||e^{i\theta}-a|-|1-a||.
\]
Because our circles lie in $Q = B(e_3,\alpha_0)$, we have the estimate $|e^{i\theta}-a|+|1-a|\sim 1$. Therefore multiplying $||e^{i\theta}-a|-|1-a||$ by $|e^{i\theta}-a|+|1-a|$, we only have to show
\[
|e^{i\theta}-a|^2-|1-a|^2 = O(AB^2)\delta.
\]
The upshot is we can use the trigonometric identity
\begin{align*}
|e^{i\theta}-a|^2 - |1-a|^2 &= 2\,\mathrm{Re}(a)(1-\cos\theta) - 2\,\mathrm{Im}(a)\sin\theta \\
&= O(\mathrm{Re}(a))\theta^2 + O(\mathrm{Im}(a))|\theta|\\
&\lesssim O(\mathrm{Re}(a))B^2\tau^2 + O(\mathrm{Im}(a))B\tau,
\end{align*}
and it suffices to estimate the components $a_1 = \mathrm{Re}(a)$ and $a_2 = \mathrm{Im}(a)$. We can use rectangle-lightplank duality to estimate both components simultaneously. We note $\Omega^{(v)}\subset C_{A\delta,w}$, so we have $w\in \mathbf D_{A\delta}(\Omega^{(v)})$, which is contained in an $\approx\delta\times \delta\tau^{-1}\times \delta\tau^{-2}$-lightplank, by Proposition \ref{prop:dual-to-d,t-rectangle}. By projecting this lightplank down to the plane $\R^2\times \{0\}$, we see that $|\mathrm{Re}(a)|\lessapprox \delta\tau^{-2}$, and $|\mathrm{Im}(a)|\lessapprox \delta\tau^{-1}$. Here we are using that $\Omega^{(v)}$ is in our favorite position (see Remark \ref{rem:favorite-posn}) so $\mathbf D_{A\delta}(\Omega^{(v)})\asymp P^{(o)}$, for a $\approx \delta\times\delta\tau^{-1}\times\delta\tau^{-2}$-lightplank containing $o = e_3$.

Collecting the estimates we have made so far, we have shown for arbitrary $|r-1|<A\delta$ and $|\theta|\lesssim B\tau$,
\[
||re^{i\theta}-a|-s|\lessapprox A\delta + \delta\tau^{-2}\cdot B^2\tau^2 + A\delta\tau^{-1}\cdot B\tau \lessapprox AB^2\delta.
\]
This finishes the proof.
\end{proof}

A useful fact about comparability is that that ``being comparable'' is almost a transitive relation. For the purpose of stating the next Proposition succinctly, if $\Omega,\Omega'$ are $A$-comparable, we write $\Omega\asymp_A\Omega'$. See  Proposition \ref{prop:both-tangent} on covering nearly lightlike separated pairs by lightplanks for our main application of this fact.
\begin{prop}[Being comparable is almost transitive]\label{almost-transitivity}
There is an absolute constant $C > 1$ such that if $\Omega^{(u)}\asymp_A\Omega^{(v)}$ and $\Omega^{(v)}\asymp_A\Omega^{(w)}$, then $\Omega^{(u)}\asymp_{CA^C}\Omega^{(w)}$.
\end{prop}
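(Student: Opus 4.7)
\medskip
\noindent\textbf{Proof proposal.} Let $v_2$ denote the core circle of $\Omega_2$, and let $\overline\Omega_{12}$ and $\overline\Omega_{23}$ be $A^2\delta, A\tau$-rectangles witnessing $\Omega_1\asymp_A \Omega_2$ and $\Omega_2\asymp_A \Omega_3$, respectively. The plan is to use $v_2$ as the core circle of a rectangle that simultaneously contains $\Omega_1$ and $\Omega_3$. The key observation is that by Corollary \ref{cor:recip} applied to the pair $\Omega_2\subset\overline\Omega_{12}$, we have $v_2\in\mathcal C_{A_1 A^5\delta}(\overline\Omega_{12})$, i.e., $\overline\Omega_{12}\subset C_{A_1A^5\delta,v_2}$. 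Applying the same corollary to $\Omega_2\subset\overline\Omega_{23}$ gives $\overline\Omega_{23}\subset C_{A_1A^5\delta,v_2}$, and hence
\[
\Omega_1\cup \Omega_3 \subset \overline\Omega_{12}\cup\overline\Omega_{23}\subset C_{A_1A^5\delta,v_2}.
\]
So both $\Omega_1$ and $\Omega_3$ sit in a common thickened annulus of thickness $A_1A^5\delta$ around $v_2$.

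To promote this into an actual rectangle, I would also control the arc length along $v_2$. Pick any point $q\in\Omega_2$ and let $q_0$ be its nearest point on $v_2$'s circle (so $|q-q_0|\lesssim\delta$). Since $\overline\Omega_{12}$ and $\overline\Omega_{23}$ each have diameter $\lesssim A\tau$ (using $\tau\ge\delta^{1/2}$ and $A\ll \delta^{-C\epsilon}$, so that $A^2\delta\ll A\tau$), every point of $\overline\Omega_{12}\cup\overline\Omega_{23}$ is within $O(A\tau)$ of $q$, hence of $q_0$. Combined with the thickness estimate, the nearest-point projection of $\overline\Omega_{12}\cup\overline\Omega_{23}$ to $v_2$'s circle lies in an arc $\gamma$ of length $O(A\tau)$ centered at $q_0$. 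Thus $\Omega_1\cup\Omega_3$ is contained in the $A_1A^5\delta$-neighborhood of $\gamma$, which is a $(A_1A^5\delta, O(A)\tau)$-rectangle with core circle $v_2$ (inflating $\tau'$ up to $(A_1A^5\delta)^{1/2}$ if necessary to meet the relation $\tau'\ge(\delta')^{1/2}$ in the rectangle definition).

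Finally, I would absorb the absolute constant $A_1$ and the $O(1)$ factor in the arc length by taking $C$ sufficiently large so that $A_1 A^5\le A^{2C}$ and $O(A)\le A^C$ for all $A>1$ (possible since $A_1$ is absolute and $A>1$). Then $\Omega_1\cup\Omega_3$ is contained in an $A^{2C}\delta, A^C\tau$-rectangle, which is exactly the statement $\Omega_1\asymp_{A^C}\Omega_3$.

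\medskip
\noindent\textbf{Main obstacle.} The only genuinely nontrivial content is in the second step: checking that both $\overline\Omega_{12}$ and $\overline\Omega_{23}$ project onto overlapping arcs of comparable length on $v_2$'s circle, and that these two arcs merge into a single arc of length $O(A\tau)$ (rather than a longer disconnected union). This relies on the fact that both rectangles contain the common $\delta,\tau$-rectangle $\Omega_2$, whose core arc lives on $v_2$, so the projections are forced to overlap near $q_0$. Once this geometric point is pinned down, the rest of the proof is bookkeeping of constants.
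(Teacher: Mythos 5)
Your proof is correct and follows essentially the same route as the paper's: the paper likewise controls the radial thickness by applying the engulfing result (Proposition \ref{prop:recip}/Corollary \ref{cor:recip}) to show $\overline\Omega_{12}\cup\overline\Omega_{23}\subset C_{A^{O(1)}\delta,v_2}$, and controls the angular extent via the radial projection onto $C_{v_2}$, using that both large rectangles contain $\Omega_2$. The overlap point you flag as the ``main obstacle'' is handled exactly as you suggest, so no gap remains.
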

\begin{proof}
Recall the notation $u = (\bar u,u_3)$, $v = (\bar v,v_3)$, $w = (\bar w,w_3)$ for the points $u,v,w\in Q$, which we identify with circles $C_u,C_v,C_w\subset \R^2$ in the usual way. Consider the radial projection
\[
\pi\colon \mathbb R^2\setminus\{\bar v\}\to C_v,\quad z\mapsto \frac{z-\bar v}{|z-\bar v|}, \quad z\in \R^2\setminus \{\bar v\}
\]
onto $C_v$. By assumption, $\pi(\Omega^{(u)}\cup\Omega^{(w)})$ is contained in an arc of length $\sim A\tau$ containing the core arc of $\Omega^{(v)}$. Therefore, by Proposition \ref{prop:recip}, $\Omega^{(u)}\cup\Omega^{(w)}\subset C_{A^{O(1)}\delta,v}$, which finishes the proof.
\end{proof}

\section{Fourier transform of surface measure on the cone}\label{app:ft}

Here we record Lemma \ref{lem:fourier-decay} and its proof.
\begin{lemma}[Fourier transform estimate]
    Let $d\sigma$ be a smooth surface measure supported in $\mathbb Cone^2$. For any $\epsilon>0$ and any $N>1$, there is a constant $C(\epsilon,N)$ so that
    \[
    |\widecheck{d\sigma}(x)|\le C(\epsilon,N)\frac1{(1+|x|)^{\frac12-\epsilon}}\frac1{(1+d(x,\Gamma_0))^{N}}
    \]
    holds for all $x\in\R^3$, where $\Gamma_0$ is the lightcone $\{(a,r)\in\R^2\times\R:||a|-|r||=0\}$ with vertex $0$.
\end{lemma}
\begin{proof}
We will prove this by combining two estimates for $|\widecheck{d\sigma}(x)|$:
\begin{itemize}
\item[(i)] $|\widecheck{d\sigma}(x)|\lesssim (1+|x|)^{-\frac12+\epsilon}$
\item[(ii)] For every $N$, $|\widecheck{d\sigma}(x)|\lesssim_N(1+d(x,\Gamma_0))^{-N}$.
\end{itemize}
The conclusion follows by taking an appropriate geometric average of these two estimates. We may assume that $|x|\ge C$ for an appropriately large constant since $|\widecheck{d\sigma}(x)| \lesssim 1$ for $|x|\lesssim 1$.

We will start with (i). Suppose $|x|\sim r\gg 1$; our aim is to show $|\widecheck{d\sigma}(x)|\lesssim r^{-\frac12+\epsilon}$. We divide $\mathbb Cone^2$ into $\sim r^{\frac12-\epsilon}$-many strips $\theta$ of angular width $r^{-\frac12+\epsilon}$ and let $\{\eta_\theta\}$ be a smooth partition of unity subordinate to $\{\theta\}$. Then with $d\sigma_\theta = \eta_\theta \,d\sigma$,
\[
\widecheck{d\sigma}(x) = \sum_\theta\widecheck{d\sigma}_\theta(x).
\]
For each $\theta$, we let $\theta^*$ be the lightplank containing the origin of dimensions $1\times r^{\frac12-\epsilon}\times r^{1-2\epsilon}$ dual to the $r^{-1+2\epsilon}$-neighborhood of $\theta$. By the Schwartz decay of $\widecheck{d\sigma}_\theta(x)$, we have
\[
|\widecheck{d\sigma}_\theta(x)|\lesssim_N |\theta|\sum_{j=0}^\infty 2^{-jN}1_{2^j\theta^*}(x).
\]
Since we assume $|x|\sim r\gg r^{1-2\epsilon}$, and the directions of $\theta^*$ are $r^{-\frac12+\epsilon}$-separated, $x$ lies in at most $\lessapprox 1$ of the $\theta^*$. Therefore,
\[
|\widecheck{d\sigma}_\theta(x)|\lessapprox |\theta| \sim r^{-\frac12+\epsilon}.
\]

Now we prove (ii), but instead of using wave packets, we give a proof based on stationary phase considerations. For this proof, we use the notation $(x',x_3)\in \R^2\times \R$ for points in $\R^3$ instead of the notation $(\bar x,x_3)$ we used earlier. Let $x = (x',x_3)$ with $|x|\gg 1$. By the symmetry $(x',x_3)\mapsto (x',-x_3)$, we may assume that $x_3>0$. We also assume that $|x'|>x_3>0$; the case $|x'|<x_3$ is similar. For an appropriate smooth and compactly supported function $a(\xi)$ in $\{\xi\in\R^2:1<|\xi|<2\}$, we  write
\[
\widecheck{d\sigma}(x) = Ea(x) =  \int a(\xi)e^{2\pi i(x'\cdot \xi+x_3|\xi|)}\,d\xi.
\]
Here $E$ is the Fourier extension operator for the cone.

Let $w$ be the nearest point on the cone $\Gamma_0$ to $x$. By elementary geometry, $x-w$ is orthogonal to the lightcone at $w$, and from this, we can compute the coordinates of $w$ in terms of $x$:
\[
w = w(x) = \left(\frac{|x'|+x_3}{2}\frac{x'}{|x'|},\frac{|x'|+x_3}{2}\right).
\]
Note from this formula for $w$ that
\[
d(x,\Gamma_0)= |x-w| \le |x'|-x_3 \lesssim |x-w| = d(x,\Gamma_0),
\]
so $d(x,\Gamma_0)\sim||x'|-x_3|$. Write
\begin{align*}
Ea(x) = Ea(w+ (x-w)) &=  \int a(\xi) e^{2\pi i(w'\cdot\xi + w_3|\xi|)}e^{2\pi i[(x'-w')\cdot \xi + (x_3-w_3)|\xi|]}\,d\xi \\
&= \int a(\xi) e^{2\pi i(w'\cdot\xi + w_3|\xi|)} e^{2\pi i\frac{|x'|-x_3}{2} (\frac{x'}{|x'|}\cdot \xi - |\xi|)}   \\
&= \int a(\xi) e^{2\pi i\phi_1(\xi)}e^{2\pi i\lambda\phi_2(\xi)}\,d\xi,
\end{align*}
where
\begin{align*}
\phi_1(\xi) &= w'\cdot\xi + w_3|\xi|\\
\phi_2(\xi) &= \frac{x'}{|x'|}\cdot\xi - |\xi| \\
\lambda = \lambda(x) &= \frac{|x'|-x_3}2.
\end{align*}
Let $\Sigma_1 = \{1<|\xi|<2:\nabla \phi_1(\xi) = 0\}$, and similarly denote $\Sigma_2=\{1<|\xi|<2:\nabla\phi_2(x)=0\}$.
Since $w$ is the nearest point to $x$ lying in $\Gamma_0$, the critical points of $\phi_1(\xi)$ in $\{1<|\xi|<2\}$ are contained in the line segment
\[
\Sigma_1 = \{1<|\xi|<2: \frac{\xi}{|\xi|} = -\frac{x'}{|x'|}\}.
\]
Likewise, $\Sigma_2$ is contained in the line segment
\[
\Sigma_2 = \{1<|\xi|<2:\frac{\xi}{|\xi|} = \frac{x'}{|x'|}\}.
\]
Consider the open sets
\[
U_1 = \{1<|\xi|<2:\angle(\xi,-x')>\frac{1}{5}\},\quad U_2 = \{1<|\xi|<2:\angle(\xi,-x') < \frac25\},
\]
and a smooth partition of unity $\{\eta_1,\eta_2\}$ subordinate to $\{U_1,U_2\}$. By linearity, $Ea = E(a\eta_1) + E(a\eta_2)$. Since the phase $x'\cdot\xi + x_3|\xi|$ has no critical points in $U_1$, we have $|E(a\eta_1)(x)|\lesssim_N (1+|x|)^{-N}\le d(x,\Gamma_0)^{-N}$ by integrating by parts. Therefore, it suffices to show that $|E(a\eta_2)(x)|\lesssim_N d(x,\Gamma_0)^{-N}$.

Since we only work with $a\eta_2$ from now on, to reduce clutter, we let $a_0$ denote $a\eta_2$. By definition of $\Sigma_2$,
\begin{equation}\label{eqn:good-supp}
\mathrm{dist}(\supp a_0, \Sigma_2)\ge\frac{1}{100}.
\end{equation}
Lastly, we note that the phase $\phi_2$ satisfies
\begin{equation}\label{eqn:harmless-phase}
\sup\{|\partial^\alpha\phi_2(\xi)|:1<|\xi|<2,|\alpha|\le N\}\le C_N
\end{equation}
for each $N\ge 1$.
Consider the following vector field and its transpose
\[
L = \frac{1}{2\pi i\lambda}\mathbf v\cdot \nabla,\quad L^tf = -\frac{1}{2\pi i\lambda}\nabla\cdot(f\mathbf v)
\]
where $\mathbf v(\xi) = \nabla\phi_2(\xi)/|\nabla\phi_2(\xi)|$, which is well defined and smooth throughout $\supp a_0$ by Equation \eqref{eqn:good-supp}. By definition $Le^{2\pi i\lambda\phi_2} = e^{2\pi i\lambda\phi_2}$, and consequently integrating by parts one time,
\begin{align*}
Ea_0( x) &= \int L^t(a_0 e^{2\pi i\phi_1})e^{2\pi i\lambda\phi_2}\\
&= -\frac{1}{2\pi i\lambda}\int \nabla\cdot(a_0e^{2\pi i\phi_1}\mathbf v)e^{2\pi i\lambda\phi_2}.
\end{align*}
Using the vector calculus identity
\[
\nabla\cdot (fg\mathbf v) = f\nabla g\cdot \mathbf v + g\nabla f\cdot \mathbf v + fg\nabla\cdot \mathbf v,
\]
applied with $f = a_0$ and $g = e^{2\pi i\phi_1}$, we get
\begin{align*}
Ea_0(x) = -\frac{1}{2\pi i\lambda}\bigg(\int a_0 [2\pi i\,e^{2\pi i\phi_1}\nabla\phi_1\cdot\mathbf v]e^{2\pi i\lambda\phi_2} + \int e^{2\pi i\phi_1}\underbrace{(\nabla a_0\cdot\nabla \mathbf v+a_0\nabla\cdot \mathbf v)}_{\equiv\ a_1}e^{2\pi i\lambda\phi_2}\bigg).
\end{align*}
Note that
\[
\nabla \phi_1(\xi) = w'+w_3\frac\xi{|\xi|} = (\frac{|x'|+x_3}{2})(\frac{x'}{|x'|}+\frac{\xi}{|\xi|})
\]
and
\[
\mathbf v = \frac{1}{|\nabla \phi_2|}(\frac{x'}{|x'|}-\frac{\xi}{|\xi|}).
\]
Therefore, $\nabla\phi_1\cdot\mathbf v = 0$, so $Ea_0(x)$ simplifies to
\[
Ea_0(x) = -\frac{1}{2\pi i\lambda}\int a_1 e^{2\pi i\phi_1}e^{2\pi i\lambda\phi_2}.
\]
Repeating the same integration-by-parts argument $N$ times with $a_1$ in place of $a_0$, and then $a_2$ in place of $a_1$, and so on, where
\[
a_{j+1} = \nabla a_j\cdot \nabla \mathbf v + a_j\nabla\cdot\mathbf v, \quad j \ge 1,
\]
we get
\[
|Ea_0(x)| \le \frac{C_N}{\lambda^N}\int |a_N|.
\]
By Equations \eqref{eqn:good-supp}, \eqref{eqn:harmless-phase} and the definition of $\mathbf v$, this is bounded by $C_N'/\lambda^N \sim_N \frac{1}{(|x'|-x_3)^N}$. Finally, since $|x'|-x_3 \sim d(x,\Gamma_0)$, we have proved
\[
|Ea(x)| \lesssim_N  \frac{1}{d(x,\Gamma_0)^N}.
\]
Together with the proof of (i), this finishes the proof of the lemma.
\end{proof}

\printbibliography

@article{barcelo1997weighted,
  title={Weighted estimates for the {H}elmholtz equation and some applications},
  author={Barcel{\'o}, Juan Antonio and Ruiz, Alberto and Vega, Luis},
  journal={Journal of Functional Analysis},
  volume={150},
  number={2},
  pages={356--382},
  year={1997},
  publisher={Elsevier}
}

@article{barcelo2008note,
  title={A note on localised weighted inequalities for the extension operator},
  author={Barcel{\'o}, JA and Bennett, JM and Carbery, Anthony},
  journal={Journal of the Australian Mathematical Society},
  volume={84},
  number={3},
  pages={289--299},
  year={2008},
  publisher={Cambridge University Press}
}

@article{barcelo2011dimension,
  title={On the dimension of divergence sets of dispersive equations},
  author={Barcel{\'o}, Juan Antonio and Bennett, Jonathan and Carbery, Anthony and Rogers, Keith M},
  journal={Mathematische Annalen},
  volume={349},
  number={3},
  pages={599--622},
  year={2011},
  publisher={Springer}
}

@article{bennett2006stein,
  title={A {S}tein conjecture for the circle},
  author={Bennett, Jonathan and Carbery, Anthony and Soria, Fernando and Vargas, Ana},
  journal={Mathematische Annalen},
  volume={336},
  pages={671--695},
  year={2006},
  publisher={Springer}
}

@article{bourgain2011bounds,
  title={Bounds on oscillatory integral operators based on multilinear estimates},
  author={Bourgain, Jean and Guth, Larry},
  journal={Geometric and Functional Analysis},
  volume={21},
  pages={1239--1295},
  year={2011},
  publisher={Springer}
}

@article{bourgain2015proof,
  title={The proof of the {$\ell^2$} decoupling conjecture},
  author={Bourgain, Jean and Demeter, Ciprian},
  journal={Annals of mathematics},
  pages={351--389},
  year={2015},
  publisher={JSTOR}
}

@article{cairo2025counterexample,
  title={A {C}ounterexample to the {M}izohata--{T}akeuchi {C}onjecture},
  author={Cairo, Hannah},
  journal={arXiv preprint arXiv:2502.06137},
  year={2025}
}

@article{carbery1997pointwise,
  title={Pointwise {F}ourier inversion and localisation in {$\mathbb R^n$}},
  author={Carbery, Anthony and Soria, Fernando},
  journal={Journal of Fourier Analysis and Applications},
  volume={3},
  pages={847--858},
  year={1997},
  publisher={Springer}
}

@article{carbery2024some,
  title={Some sharp inequalities of {M}izohata--{T}akeuchi-type},
  author={Carbery, Anthony and Iliopoulou, Marina and Wang, Hong},
  journal={Revista Mathematica Iberoamericana},
  volume={40},
  number={4},
  year={2024}
}

@article{cho2017fractal,
  title={Fractal {S}trichartz estimate for the wave equation},
  author={Cho, Chu-Hee and Ham, Seheon and Lee, Sanghyuk},
  journal={Nonlinear Analysis: Theory, Methods \& Applications},
  volume={150},
  pages={61--75},
  year={2017},
  publisher={Elsevier}
}

@book{demeter2020fourier,
  title={Fourier restriction, decoupling and applications},
  author={Demeter, Ciprian},
  volume={184},
  year={2020},
  publisher={Cambridge University Press}
}

@article{du2019sharp,
  title={Sharp {$L^2$} estimates of the {S}chr{\"o}dinger maximal function in higher dimensions},
  author={Du, Xiumin and Zhang, Ruixiang},
  journal={Ann. of Math.(2)},
  volume={189},
  number={3},
  pages={837--861},
  year={2019}
}

@article{du2021weighted,
  title={Weighted restriction estimates and application to {F}alconer distance set problem},
  author={Du, Xiumin and Guth, Larry and Ou, Yumeng and Wang, Hong and Wilson, Bobby and Zhang, Ruixiang},
  journal={American Journal of Mathematics},
  volume={143},
  number={1},
  pages={175--211},
  year={2021},
  publisher={Johns Hopkins University Press}
}

@article{erdogan2004note,
  title={A note on the Fourier transform of fractal measures},
  author={Erdo{\u g}an, M Burak},
  journal={Mathematical Research Letters},
  volume={11},
  number={3},
  pages={299--313},
  year={2004},
  publisher={International Press of Boston}
}

@article{guth2020falconer,
  title={On Falconer’s distance set problem in the plane},
  author={Guth, Larry and Iosevich, Alex and Ou, Yumeng and Wang, Hong},
  journal={Inventiones mathematicae},
  volume={219},
  number={3},
  pages={779--830},
  year={2020},
  publisher={Springer}
}

@article{guo1993p,
  title={On the {$p$}-thin problem for hypersurfaces of {$\mathbb R^n$} with zero {G}aussian curvature},
  author={Guo, Kanghui},
  journal={Canadian Mathematical Bulletin},
  volume={36},
  number={1},
  pages={64--73},
  year={1993},
  publisher={Cambridge University Press}
}

@online{HAPPY,
        title = {{P}rof. {L}arry {G}uth ({MIT}) 2022 - {A}n enemy scenario in restriction theory},
        date = {2022},
        organization = {You{T}ube},
        author = {HAPPY},
        url = {https://www.youtube.com/watch?v=x-DET83UjFg},
    }

@article{harris2022length,
  title={Length of sets under restricted families of projections onto lines},
  author={Harris, Terence LJ},
  journal={arXiv preprint arXiv:2208.06896},
  year={2022}
}

@article{iosevich2014decay,
  title={Decay of the {F}ourier transform},
  author={Iosevich, Alex and Liflyand, Elijah},
  journal={Analytic and geometric aspects. Birkh{\"a}user/Springer, Basel},
  year={2014},
  publisher={Springer}
}

@article{mattila1987spherical,
  title={Spherical averages of {F}ourier transforms of measures with finite energy; dimensions of intersections and distance sets},
  author={Mattila, Pertti},
  journal={Mathematika},
  volume={34},
  number={2},
  pages={207--228},
  year={1987},
  publisher={London Mathematical Society}
}

@book{mizohata2014cauchy,
  title={On the {C}auchy problem},
  author={Mizohata, Sigeru},
  volume={3},
  year={2014},
  publisher={Academic Press}
}

@article{oberlin2015application,
  title={Application of a {F}ourier {R}estriction {T}heorem to {C}ertain {F}amilies of {P}rojections in {$\mathbb{R}^3$}},
  author={Oberlin, Daniel and Oberlin, Richard},
  journal={The Journal of Geometric Analysis},
  volume={25},
  pages={1476--1491},
  year={2015},
  publisher={Springer}
}

@article{pramanik2022furstenberg,
  title={A {F}urstenberg-type problem for circles, and a {K}aufman-type restricted projection theorem in $\mathbb R^3$},
  author={Pramanik, Malabika and Yang, Tongou and Zahl, Joshua},
  journal={arXiv preprint arXiv:2207.02259},
  year={2022}
}

@article{sjolin1993estimates,
  title={Estimates of spherial averages of {F}ourier transforms and dimensions of sets},
  author={Sj{\"o}lin, Per},
  journal={Mathematika},
  volume={40},
  number={2},
  pages={322--330},
  year={1993},
  publisher={London Mathematical Society}
}

@article{tao2003sharp,
  title={A sharp bilinear restriction estimate for paraboloids},
  author={Tao, Terence},
  journal={Geometric \& Functional Analysis GAFA},
  volume={13},
  pages={1359--1384},
  year={2003},
  publisher={Springer}
}

@article{wolff1997kakeya,
  title={A {K}akeya-type problem for circles},
  author={Wolff, Thomas},
  journal={American Journal of Mathematics},
  volume={119},
  number={5},
  pages={985--1026},
  year={1997},
  publisher={Johns Hopkins University Press}
}

@article{wolff1999decay,
  title={Decay of circular means of Fourier transforms of measures.},
  author={Wolff, Thomas},
  journal={IMRN: International Mathematics Research Notices},
  volume={1999},
  number={10},
  year={1999}
}

@article{wolff1999recent,
  title={Recent work connected with the {K}akeya problem},
  author={Wolff, Thomas},
  journal={Prospects in mathematics (Princeton, NJ, 1996)},
  volume={2},
  number={129-162},
  pages={4},
  year={1999},
  publisher={Citeseer}
}

@article{wolff2000local,
  title={Local smoothing type estimates on {$L^p$} for large {$p$}},
  author={Wolff, Thomas},
  journal={Geometric and functional analysis},
  volume={10},
  number={5},
  pages={1237--1288},
  year={2000},
  publisher={BIRKHAEUSER}
}

\end{document}